\newcommand\ris[5]{\raisebox{#1mm}{\hspace{#2mm}\includegraphics[width=#3mm]{#4.eps}\hspace{#5mm}}}
\newtheorem{theorem}{Theorem}[section]
\newtheorem{lemma}[theorem]{Lemma}
\newtheorem{corollary}[theorem]{Corollary}
\newtheorem{proposition}[theorem]{Proposition}
\newtheorem{definition}[theorem]{Definition}
\newtheorem{remark}[theorem]{Remark}
\newcommand{\R}{\mathbb R}
\newcommand{\Q}{\mathbb Q}
\begin{document}

\title{Lipschitz geometry of surface germs in $\R^4$: metric knots}
\author[]{Lev Birbrair*}\thanks{*Research supported under CNPq 302655/2014-0 grant and by Capes-Cofecub}
\address{Departamento de Matem\'atica, Universidade Federal do Cear\'a
(UFC), Campus do Pici, Bloco 914, Cep. 60455-760. Fortaleza-Ce,
Brasil} \email{lev.birbrair@gmail.com}

\author[]{Michael Brandenbursky**}\thanks{**Research partially supported by Humboldt foundation}
\address{Department of Mathematics, Ben Gurion University of the Negev,
Beer Sheva, Israel}\email{brandens@bgu.ac.il}

\author[]{Andrei Gabrielov$\dagger$}\thanks{$\dagger$ Research supported by the NSF grant DMS-1665115}
\address{Department of Mathematics, Purdue University,
West Lafayette, IN 47907, USA}\email{gabriea@purdue.edu}

\date{\today}
\keywords{Lipschitz geometry, Surface singularities, Knots, Jones polynomials}

\subjclass{51F30, 14P10, 03C64, 57K10}
\begin{abstract}
 {A link at the origin of an isolated singularity of a two-dimensional semialgebraic
surface in $\R^4$ is a topological knot (or link) in $S^3$. We study the connection
between the ambient Lipschitz geometry of semialgebraic surface germs in $\R^4$
and knot theory. Namely, for any knot $K$, we construct a surface $X_K$
in $\R^4$ such that: the link at the origin of $X_{K}$ is a trivial knot; the germs $X_K$
are outer bi-Lipschitz equivalent for all $K$; two germs $X_{K}$ and $X_{K'}$
are ambient semialgebraic bi-Lipschitz equivalent only if the knots $K$ and $K'$ are isotopic.
We show that the Jones polynomial can be used to recognize ambient bi-Lipschitz
non-equivalent surface germs in $\R^4$,
even when they are topologically trivial and outer bi-Lipschitz equivalent.}
\end{abstract}
\maketitle

\section{Introduction}
We study the difference between the outer and ambient bi-Lipschitz equivalence of semialgebraic
 surface germs at the origin in $\R^4$. Two surface germs are outer bi-Lipschitz
 equivalent if they are bi-Lipschitz equivalent as abstract metric spaces with the outer metric $d(x,y)=\|x-y\|$.
 Ambient bi-Lipschitz equivalence means that there exists a germ of a bi-Lipschitz,
orientation preserving, homeomorphism of the ambient space mapping one of them to the other one.
Note that in Singularity Theory the homeomorphism is not required to be orientation preserving.
We add this condition to be consistent with the isotopy equivalence relation in Knot Theory.
Also, to avoid confusion between the Singularity Theory and Knot Theory notions of the link,
we always write ``the link at the origin'' speaking of the link of a surface germ.

 If a surface germ in $\R^4$ with a connected link at the origin has an isolated singularity then its link is a knot in $S^3$.
 The results of \cite{BG} show that
 ambient equivalence is different from outer equivalence even when there are no topological
 obstructions. This phenomenon is called ``metric knots.''
 We consider the following question: How different are these equivalence relations?
 In the previous paper \cite{BG} we show that, for any given ambient topological type of a surface germ, one can find infinitely many equivalence classes with respect to ambient  bi-Lipschitz equivalence. In this paper we start by showing that the
 question becomes nontrivial even when ``there is no topology,'' i.e., for the germs with unknotted links at the origin.
 Universality Theorem (Theorem \ref{universality} below) implies that the ambient bi-Lipschitz classification
 in this case ``contains all of Knot Theory.''
 Namely, for any knot $K$, one can construct a germ of a surface $X_K$ in $\R^4$ such that:\newline
 1. The link at the origin of $X_{K}$ is a trivial knot;\newline
 2. The germs $X_K$ are outer bi-Lipschitz equivalent for all $K$;\newline
 3. Two germs $X_{K}$ and $X_{K'}$ are ambient semialgebraic bi-Lipschitz equivalent only if the knots $K$ and $K'$ are isotopic.

The second theorem (Theorem \ref{twist} below) states that,
for each germ $X_K$ in Universality Theorem, there are infinitely many semialgebraic
surfaces $X_{K,i}$ satisfying Universality Theorem, such that $X_{K,i}$ and $X_{K,j}$ are semialgebraic ambient bi-Lipschitz equivalent only if $i=j$.

The proofs are based on the following results of Sampaio \cite{S}  and Valette \cite{V}.

\begin{theorem}\cite[Theorem 2.2]{S}\label{Sampaio}
If $(X,0)$ and $(Y,0)$ are ambient semialgebraic bi-Lipschitz equivalent semialgebraic germs, then their tangent cones $C_0(X)$ and $C_0(Y)$ are ambient semialgebraic bi-Lipschitz equivalent.
\end{theorem}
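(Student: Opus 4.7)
The plan is to construct an ambient semialgebraic bi-Lipschitz self-map of $\R^n$ by taking a ``derivative at the origin'' of the given equivalence, and then to verify that this derivative carries $C_0(X)$ onto $C_0(Y)$. Setup: let $h:(\R^n,0)\to(\R^n,0)$ be the germ of a semialgebraic bi-Lipschitz homeomorphism with $h(X)=Y$, with bi-Lipschitz constant $L>0$. I would define
\[ H(v) := \lim_{t \to 0^+}\frac{h(tv)}{t} \]
and aim to establish three things in order: (i) the limit exists for every $v \in \R^n$; (ii) $H$ is a semialgebraic bi-Lipschitz homeomorphism of $\R^n$; (iii) $H(C_0(X)) = C_0(Y)$.

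For (i), I would fix $v \neq 0$ and note that the curve $t \mapsto h(tv)$ is semialgebraic, hence admits a Puiseux expansion $h(tv) = a t^\alpha + o(t^\alpha)$ with $a \neq 0$ and some $\alpha > 0$. The bi-Lipschitz bound $L^{-1} t\|v\| \leq \|h(tv)\| \leq L t \|v\|$ forces $\alpha = 1$, so the limit exists and equals $a$. For (ii), positive homogeneity $H(\lambda v) = \lambda H(v)$ is immediate from the definition. The upper Lipschitz bound passes to the limit,
\[ \|H(v) - H(w)\| = \lim_{t \to 0^+}\frac{\|h(tv) - h(tw)\|}{t} \leq L\|v - w\|, \]
and the lower bound is obtained by repeating the construction for $h^{-1}$; a short computation using the Lipschitz continuity of $h^{-1}$ shows that the resulting map is a two-sided inverse for $H$. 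Semialgebraicity of the graph of $H$ follows from quantifier elimination applied to its defining formula in $h$.

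For (iii), take $v \in C_0(X)$, written as a limit $v = \lim_{n} x_n / t_n$ with $x_n \in X$, $t_n = \|x_n\| \to 0^+$. Setting $u_n := x_n/t_n \to v$, I want to show $h(x_n)/t_n \to H(v)$, which would place $H(v)$ in $C_0(Y)$. The key observation is that the family $g_t(u) := h(tu)/t$ is equi-Lipschitz with constant $L$ and converges pointwise to $H$, so by Arzelà--Ascoli the convergence is uniform on compact subsets. Therefore $h(x_n)/t_n = g_{t_n}(u_n) \to H(v)$, and the reverse inclusion follows symmetrically from the same argument applied to $h^{-1}$.

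The main obstacle I anticipate is step (iii). The derivative $H$ is built out of straight-line limits, whereas a point of the tangent cone is defined by a diagonal limit along an arbitrary sequence $x_n \to 0$. Bridging these two notions requires uniform rather than pointwise convergence of $g_t$ to $H$, which is available only because the bi-Lipschitz hypothesis supplies a uniform Lipschitz constant for the whole family. A secondary subtlety worth flagging is that even the existence of the directional limit $H(v)$ genuinely depends on the semialgebraic hypothesis through Puiseux; the statement would fail without it, which is consistent with the theorem being stated in the semialgebraic category.
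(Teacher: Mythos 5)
Your argument is correct, but note that the paper itself contains no proof of this statement: it is quoted verbatim from Sampaio \cite{S}, so the only meaningful comparison is with the proof in that reference. Your route is essentially the same as the source's: form the rescalings $g_t(u)=h(tu)/t$, observe they are equi-Lipschitz, pass to the limit map $H$, and use uniform convergence on compacts to convert straight-line limits into the diagonal limits defining the tangent cone, treating $h^{-1}$ symmetrically to get a two-sided inverse; quantifier elimination gives semialgebraicity of $H$. The genuine difference is that Sampaio proves a stronger statement in which the bi-Lipschitz homeomorphism is \emph{not} assumed definable, so the directional limit $\lim_{t\to 0^+}h(tv)/t$ need not exist and he must work with subsequential (Arzel\`a--Ascoli) limits, arranging compatible subsequences for $h$ and $h^{-1}$; your Puiseux argument, available because $h$ is semialgebraic, buys the honest derivative and the semialgebraicity of $H$, which is all that the statement as quoted requires. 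Your closing remark should be tempered accordingly: without definability of $h$ what fails is the existence of the limit and the semialgebraicity of the tangent map, not the bi-Lipschitz equivalence of the cones, which is exactly the content of Sampaio's more general theorem. One small addition worth making: the paper's Definition \ref{equivalences} of ambient equivalence demands an orientation preserving homeomorphism, so you should also record that $H$ is orientation preserving; this follows since $H$ is a locally uniform limit of the orientation preserving homeomorphisms $g_t$ and local degree is stable under such limits.
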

\begin{theorem}\cite[Corollary 0.2]{V}\label{Valette}
If two semialgebraic germs  $(X,0)$ and $(Y,0)$ are semialgebraic bi-Lipschitz homeomorphic, then there is a semialgebraic bi-Lipschitz homeomorphism $h:(X,0)\to(Y,0)$ preserving the distance to the origin.
\end{theorem}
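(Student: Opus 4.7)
\textbf{Proof plan for Theorem \ref{Valette}.}

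The plan is to start from the given semialgebraic bi-Lipschitz homeomorphism $f:(X,0)\to(Y,0)$ and deform it along radial directions in $Y$ so that it respects the distance to the origin. The first observation is almost free: since $f$ is bi-Lipschitz with $f(0)=0$, there exist constants $0<c_1\le c_2$ with
\[ c_1\|x\|\;\le\;\|f(x)\|\;\le\;c_2\|x\| \]
for all $x$ near $0$. Thus $f(x)$ already sits on a sphere of radius comparable to $\|x\|$, and only the radius needs to be corrected.

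The key technical ingredient I would invoke is a semialgebraic Lipschitz conic structure: near $0$, a semialgebraic germ admits a semialgebraic bi-Lipschitz homeomorphism to the (truncated) cone over its link, in which the ``height'' coordinate equals the distance to the origin. Equivalently, by a semialgebraic cell decomposition adapted to $\|\cdot\|$ one obtains, for each of $X$ and $Y$, a semialgebraic bi-Lipschitz product chart
\[ \Phi_X:X\to L_X\times(0,\varepsilon),\qquad \Phi_Y:Y\to L_Y\times(0,\varepsilon), \]
where $L_X,L_Y$ are the links at $0$ and the second coordinate of $\Phi_X,\Phi_Y$ equals the distance to the origin. Conjugating $f$ by these charts produces a semialgebraic bi-Lipschitz map $\tilde f(\xi,r)=(\alpha(\xi,r),\beta(\xi,r))$ with $c_1r\le\beta(\xi,r)\le c_2r$.

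Once in these coordinates, the modification is straightforward in principle: define
\[ \tilde h(\xi,r)\;:=\;(\alpha(\xi,r),\,r), \]
which by construction has second coordinate equal to $r$, hence preserves the distance to the origin after transporting back via $\Phi_X,\Phi_Y$. Semialgebraicity is automatic. To see that $\tilde h$ is bi-Lipschitz, one estimates: vertical changes $r\mapsto r'$ in the target cost the ``correct'' amount $|r-r'|$ (instead of $|\beta(\xi,r)-\beta(\xi,r')|$, which is comparable by the lower and upper bounds on $\beta$), while horizontal changes $\alpha(\xi,r)\mapsto\alpha(\xi',r)$ are controlled by bi-Lipschitzness of $\tilde f$ together with the comparability of $\beta$ with $r$.

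The main obstacle, and the only serious one, is justifying the conic trivializations $\Phi_X,\Phi_Y$ in the semialgebraic category, i.e.\ a semialgebraic version of the Lipschitz conic structure. This is where the real work lies: one needs a semialgebraic cell decomposition such that the distance function $\|\cdot\|$ is, on each cell, bi-Lipschitz equivalent to a coordinate projection, and then one has to patch these local trivializations together across strata without losing the Lipschitz property. Granting this decomposition, the coordinate-change argument above converts the given $f$ into the desired distance-preserving $h$ with only bookkeeping estimates.
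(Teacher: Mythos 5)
First, note that the paper does not prove this statement at all: it is quoted verbatim from Valette \cite[Corollary 0.2]{V} and used as an external input, so the only meaningful comparison is with Valette's own argument. Measured against that, your plan has a decisive gap: the ``key technical ingredient'' you invoke --- that every semialgebraic germ admits a semialgebraic bi-Lipschitz product chart $\Phi_X:X\to L_X\times(0,\varepsilon)$ whose second coordinate is the distance to the origin (a Lipschitz conic structure) --- is false in the outer Lipschitz category. With the cone (or product) metric on $L_X\times(0,\varepsilon)$ this would say that every germ is outer metrically conical in the sense of Definition \ref{conical}; the standard $\beta$-horn $C_\beta$ of Definition \ref{horn} with $\beta>1$ already contradicts this (its sphere sections at distance $t$ have diameter of order $t^\beta$ and the area of $C_\beta\cap B(0,t)$ grows like $t^{1+\beta}$, so no bi-Lipschitz homeomorphism onto the cone over a circle exists; cf.\ Birbrair's classification \cite{Birb}). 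In fact the one-bridge germs with $(\beta_1,\beta_2)$-bridges constructed in this very paper are deliberately non-conical, and if your trivializations existed, the semialgebraic Lipschitz classification of germs would collapse onto the topology of the link --- precisely what Theorem \ref{universality} shows does not happen. What is true in general is only the \emph{topological} local conic structure, which carries none of the Lipschitz information your estimates need; and if you retreat to the tautological map $x\mapsto(\text{link parameter},\|x\|)$, the chart carries no metric content and the bookkeeping estimates never start.

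There is a second, independent problem even if one grants some trivialization: the correction $\tilde h(\xi,r):=(\alpha(\xi,r),r)$ need not be injective. Bi-Lipschitzness only forces $c_1r\le\beta(\xi,r)\le c_2r$; it does not prevent two points of the same sphere section from being mapped to points on the same ``ray'' of the target at different (comparable) radii, since the whole distance between their images can be carried by the radial component $|\beta(\xi,r)-\beta(\xi',r)|$. Radial reprojection then collapses such pairs, so the ``straightforward modification'' is not a homeomorphism without further work --- and arranging that work is exactly the nontrivial content of the theorem. Valette's actual proof does not proceed by reprojecting a given $f$; it constructs the distance-preserving homeomorphism via his Lipschitz cell decompositions (L-regular decompositions and regular systems of hypersurfaces, as in \cite{valette2005Lip}). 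So, as it stands, your plan rests on a false lemma plus a correction step that can fail, and the obstacle you flag at the end (``justifying the conic trivializations'') is not a technical point to be granted but the place where the statement would become false.
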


In Section \ref{Metric Knots} we define $(\beta_1,\beta_2)$-bridges and the saddle move,
closely related to the broken bridge construction in \cite{BG}.
A \emph{one-bridge surface germ} is a surface germ containing a single $(\beta_1,\beta_2)$-bridge and metrically conical outside it.
The saddle move relates the metric problem of ambient semialgebraic
bi-Lipschitz equivalence of two one-bridge surface germs in $\R^4$ with the
topological problem of isotopy of two knots in $S^3$ corresponding to the links at the origin of the surfaces
obtained from these one-bridge surface germs by the saddle moves (see Definition \ref{saddle}).
That is why topological knot invariants, such as the Jones polynomial,
yield metric knot invariants, which can be used to recognize ambient semialgebraic bi-Lipschitz non-equivalence of surface germs.

Although one-bridge surface germs are the simplest examples of not normally embedded surfaces, they  have rather non-trivial ambient Lipschitz geometry.
Another version of Universality Theorem (Theorem \ref{twoknots} below) states that, for any two knots $K$ and $L$,
one can construct a one-bridge surface germ $X_{KL}$ such that:\newline
1. The link at the origin of $X_{KL}$ is isotopic to $L$;\newline
2. For any knots $K$ and $L$, all surface germs $X_{KL}$ are outer bi-Lipschitz equivalent;\newline
%3. For a fixed knot $K$, all surface germs $X_{KL}$ have ambient bi-Lipschitz equivalent tangent cones.\newline
3. Surface germs $X_{{K_1}L}$ and $X_{{K_2}L}$ are ambient semialgebraic bi-Lipschitz
equivalent only if the knots $K_1$ and $K_2$ are isotopic.

In Section \ref{invariants} we consider the Jones polynomial of the link at the origin $L=L_{S(X)}$ of a surface germ $S(X)$ obtained from a one-bridge surface germ $X$ by the saddle move (see Definition \ref{saddle}). Since the isotopy class of $L$ is an ambient semialgebraic Lipschitz invariant, its Jones polynomial becomes an ambient Lipschitz invariant of $X$.
If $X=X'_{K,i}$ is a ``twisted'' surface constructed in \cite{BG} (see also Theorem \ref{twist}) and $K$ is a trivial knot, then $L$ is a torus link. Its Jones polynomial is computed completely (see Corollary \ref{specialcase} and Remark \ref{last-remark}) and determines the number $i$ of twists.
This shows that Jones polynomial can be used to prove ambient bi-Lipschitz non-equivalence of metric knots.

If we do not suppose the surface germ to be a one-bridge surface germ, we obtain a stronger version of Universality Theorem (Theorem \ref{twoknots-wedge} below). It states that, for any two knots $K$ and $L$, and any two rational numbers $\alpha > 1$ and $\beta > 1$,
  one can construct a surface germ  $X^{\alpha\beta}_{KL}$ such that:\newline
1. The link at the origin of $X^{\alpha\beta}_{KL}$ is isotopic to $L$;\newline
2. For a fixed knot $K$, the tangent link of $X^{\alpha\beta}_{KL}$ (i.e., the intersection of the tangent cone with the unit sphere)  is isotopic to $K$; \newline
3. All surface germs $X^{\alpha\beta}_{KL}$ are outer bi-Lipschitz equivalent for fixed $\alpha$ and $\beta$.

All sets, functions and maps in this paper are assumed to be real semialgebraic.
We use semialgebraic bi-Lipschitz equivalence, because we refer to the theorem of Valette \cite{V}.
Our results are also true for subanalytic bi-Lipschitz equivalence of subanalytic surface germs, and
we expect them to remain true in any polynomially bounded o-minimal structure over $\R$.

\section{Definitions and Notations}
%\vspace{0,5cm}
We consider germs at the origin of semialgebraic surfaces (two-dimensional semialgebraic sets) in $\R^4$.

\begin{definition}
\emph{A surface $X$ can be considered as a metric space, equipped with either the outer metric $d(x,y)=\|x-y\|$
or the inner metric $d_i(x,y)$ defined as the minimal length of a path in $X$ connecting $x$ and $y$.
A germ $X$ is \emph{normally embedded} if its inner and outer metrics are equivalent.}
\end{definition}

\begin{definition}\label{equivalences}
\emph{Two germs of semialgebraic sets $(X,0)$ and $(Y,0)$ are \emph{outer bi-Lipschitz equivalent}
if there exists a homeomorphism $H: (X,0)\rightarrow (Y,0)$ bi-Lipschitz with respect to the outer metric. The germs are \emph{ semialgebraic outer bi-Lipschitz equivalent} if the map $H$ can be chosen to be semialgebraic.
The germs are \emph{ambient bi-Lipschitz equivalent} if there exists an orientation preserving bi-Lipschitz homeomorphism
$\widetilde{H}: (\R^4,0) \rightarrow (\R^4,0)$, such that $\widetilde{H}(X)=Y$. The germs are \emph{ semialgebraic ambient bi-Lipschitz equivalent} if the map $\widetilde{H}$ can be chosen to be semialgebraic.}
\end{definition}
\begin{definition}\label{link}
\emph{The \emph{link at the origin} $L_X$ of a germ $X$ is the equivalence class of the sets $X\cap S^3_{0,\varepsilon}$ for small positive $\varepsilon$ with respect to the ambient bi-Lipschitz equivalence. The \emph{tangent link} of $X$ is the link at the origin of the tangent cone of $X$.}
\end{definition}
\begin{remark}\label{well-defined}
\emph{By the finiteness theorems of Mostowski, Parusinski and Valette (see \cite{Mostowski}, \cite{Parusinski} and \cite{valette2005Lip}) the link at the origin is well defined.
We write ``the link at the origin'' speaking of this notion of the link from Singularity Theory, reserving
the word ``link'' for the notion of the link in Knot Theory.
If $X$ has an isolated singularity at the origin then each connected component of $L_X$ is a knot in $S^3$.}
\end{remark}

\begin{definition}\label{conical}
\emph{A semialgebraic germ $(X,0) \subset \R^n$ is called \emph{outer metrically conical} if there exists a germ of a bi-Lipschitz homeomorphism $H:(X,0)\to C(L_X)$, where $C(L_X)$ is a straight cone over $L_X$. The map $H$ is called a \emph{conification map}.
A germ $(X,0)$ is called \emph{ambient metrically conical} if there exists a germ of a bi-Lipschitz homeomorphism $\widetilde{H}:\R^n\to \R^n$, such that $\widetilde{H}(X)=C(L_X)$. The map $\widetilde{H}$ is also called a \emph{conification map}.
A germ $(X,0)$ is called \emph{outer (ambient) semialgebraic metrically conical} if a corresponding conification map can be chosen to be semialgebraic.}
\end{definition}

\begin{remark}\label{conical-remark}
\emph{Notice that the definition makes sense for semialgebraic germs of any dimension, not only for surface germs.}
\end{remark}

\begin{definition}\label{def:arc}\normalfont
An \emph{arc} in a semialgebraic germ $(X,0)$ is a germ of a semialgebraic embedding
$\gamma:[0,\epsilon)\rightarrow X$ such that $\gamma(0)=0$. Unless otherwise specified,
 arcs are parameterized by the distance to the origin, i.e., $\|\gamma(t)\|=t$.
 We identify an arc with its image in $X$.
\end{definition}

\begin{definition}\label{ord}
\emph{Let $f\not\equiv 0$ be (a germ at the origin of) a semialgebraic function defined on an arc $\gamma$.
The \emph{order} $\alpha$ of $f$ on $\gamma$ (notation $\alpha=ord_\gamma f$) is the value $\alpha\in\Q$
such that $f(\gamma(t))=ct^{\alpha}+o(t^{\alpha})$ as $t\to 0$, where $c\ne 0$.
If $f\equiv 0$ on $\gamma$, we set $ord_\gamma f=\infty$.}
\end{definition}

For any two arcs $\gamma$ and $\gamma'$ in $X$ one can define two orders of contact: inner and outer.

\begin{definition}\label{contactorder}
\emph{The \emph{outer order of contact} $tord(\gamma,\gamma')$ is defined as  $ord_{\gamma}f$, where $f(t)=\|\gamma(t)-\gamma'(t)\|$.
The \emph{inner order of contact} $itord(\gamma,\gamma')$ is defined as $ord_{\gamma}g$, where $g(t)=d_p(\gamma(t),\gamma'(t))$.
Here $d_{p}$ is a definable pancake metric (see \cite{BM}) equivalent to the inner metric.  These two orders of contact are rational
numbers  such that $1\le itord(\gamma,\gamma')\le tord(\gamma,\gamma')$.}
\end{definition}

\begin{definition}\label{wedge-bridge}
\normalfont Let $\beta>1$ be a rational number. Consider the space $\R^3$ with coordinates $(x,y,z)$.
For a fixed $t\ge 0$, let $Z_t=\{|x|\le t,\;|y|\le t\}$ be a square in the $(x,y)$-plane $\{z=t\}$ and let $Z=\bigcup_{t\ge 0}Z_t$.
Let $W^+_t$ be the subset of $Z_t$
bounded by the line segment $I^+_t=\{|x|\le t,\;y=t\}$ and the union $J^+_t$ of the two line segments connecting the endpoints
of $I^+_t$ with the point $(0,t^\beta)$.
Let $W^-_t=\{(x,y):(x,-y)\in W^+_t\}$ and $J^-_t=\{(x,y):(x,-y)\in J^+_t\}$.
Let $W_t=W^+_t\cup W^-_t$ (shaded area in Figure~\ref{fig:wt}a) and let $W={\bigcup}_{t\ge 0}\, W_t\subset \R^3$.
A \emph{$\beta$-bridge} is the surface germ $B_\beta={\bigcup}_{t\ge 0}\,J^+_t\cup J^-_t$ .
Note that the tangent cone of $W$ is the set $\{|x|\le |y|\le z\}$
and the tangent cone of $B_\beta$ is the surface germ $\{|x|=|y|\le z\}$.
\end{definition}

\begin{figure}
\centering
\includegraphics[width=5.5in]{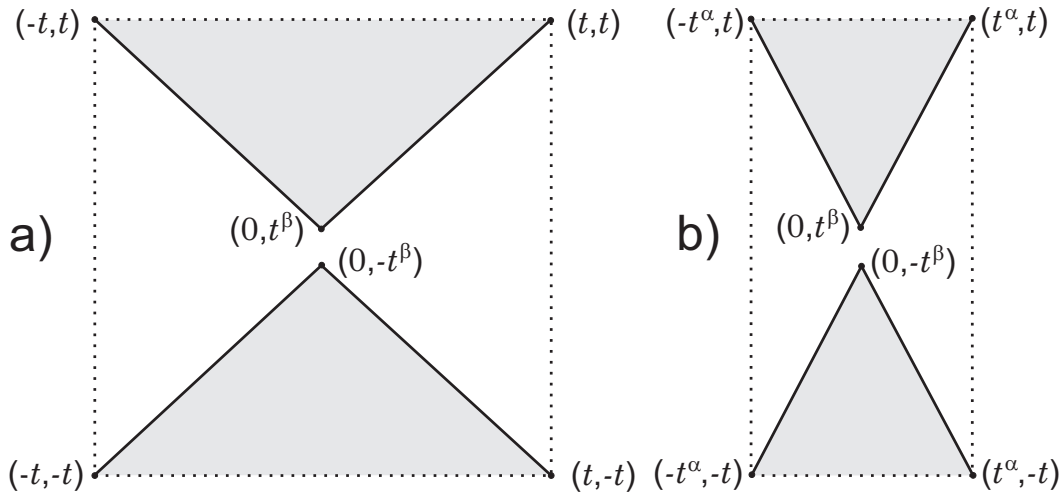}
\caption{a) The set $W_t$. b) The set $W^{\alpha\beta}_t$.}\label{fig:wt}
\end{figure}
\begin{figure}
\centering
\includegraphics[width=4in]{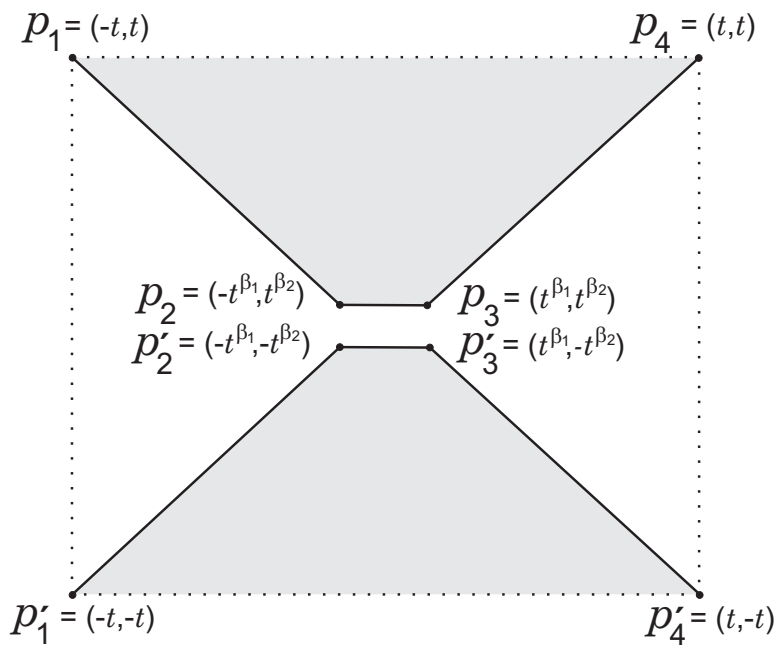}
\caption{The set $U_t$.}\label{fig:pic4}
\end{figure}

\begin{definition}\label{bridge}
\normalfont Let $1<\beta_1\le\beta_2$ be two rational numbers.
For a fixed $t\ge 0$, let $Z_t=\{|x|\le t,\;|y|\le t,\;z=t\}$ and $Z=\bigcup_{t\ge 0}Z_t$  be as in Definition \ref{wedge-bridge}.
In the $xy$-plane $\{z=t\}$ consider the points (see Figure~\ref{fig:pic4})
$$p_1(t)=(-t, t),\; p_2(t)=(-t^{\beta_1}, t^{\beta_2}),\;p_3(t)= (t^{\beta_1}, t^{\beta_2}) ,\;  p_4(t)= (t, t),$$
$$p'_1(t)= (-t, -t),\;  p'_2(t)= (-t^{\beta_1}, -t^{\beta_2}),\;   p'_3(t)= (t^{\beta_1},-t^{\beta_2}),\;  p'_4(t)= (t, -t).$$
Let us connect the points $p_1(t),\,p_2(t),\,p_3(t),\,p_4(t)$ by three line segments, and define $\bar{J}^+_t$ as the union of these three segments.
Let $U^+_t \subset Z_t$ be the convex hull of $\bar{J}^+_t$. Let $P^+_t$ be the segment connecting the points $p_2(t)$ and $p_3(t)$.
Similarly, let $\bar{J}^-_t$ be the union of segments connecting the points $p_1(t),\,p'_2(t),\,p'_3(t),\,p'_4(t)$, and let $U^-_t$ be the convex hull of $\bar{J}^-_t$ and $P^-_t$ be the segment connecting $p'_2(t)$ with $p'_3(t)$. Let $P_t=P^+_t\cup P^-_t$ and let $P=\bigcup_{t\ge 0} P_t \subset \R^3$.
Let $U_t=U^+_t\cup U^-_t$ (shaded area in Figure~\ref{fig:pic4}), and let  $U=\bigcup_{t\ge 0} U_t \subset \R^3$.
A \emph{$(\beta_1,\beta_2)$-bridge} is the surface germ $B_{\beta_1 \beta_2}=\bigcup_{t\ge 0}\bar{J}_t$,
where $\bar{J}_t=\bar{J}^+_t\cup\bar{J}^-_t$.

Note that the set $U$ has the same tangent cone at the origin as $W$, while the tangent cone at the origin of $P$ is the positive $z$-axis.
Note also that, for $\beta_1=\beta_2=\beta$, the $(\beta,\beta)$-bridge is outer bi-Lipschitz equivalent to the $\beta$-bridge.
 \end {definition}

\begin{definition}\label{one-bridge}
\normalfont Let $X$ be a semialgebraic surface germ in $\R^4$ with the link at the origin homeomorphic to a circle in $S^3$.
We say that $X$ is a \emph{one-bridge surface germ} if\newline
1. There exists a semialgebraic bi-Lipschitz $C^1$ embedding $\Theta:Z\to\R^4$ such that $\Theta(B_{\beta_1 \beta_2})=X\cap \Theta(Z)$.\newline
2. The union  $X\cup \Theta(Z)$ is normally embedded in $\R^4$ and ambient semialgebraic metrically conical: there exist a semialgebraic
bi-Lipschitz homeomorphism $\widetilde{H}:\R^4\to\R^4$, such that $\widetilde{H}(X\cup\Theta(Z))$ is a straight cone.
 \end{definition}

\begin{definition}\label{wedge}
\normalfont Let $\alpha>1$ and $\beta>1$ be rational numbers. Consider the space $\R^3$ with coordinates $(x, y, z)$.
For a fixed $t\ge 0$, let $Z^\alpha_t=\{|x|\le t^\alpha,\;|y|\le t\}$ be a rectangle
in the $(x,y)$-plane $\{z=t\}$. Let $W_t^{\alpha+}$ be the subset of the rectangle $Z^\alpha_t$
bounded by the line segment $I_t^{\alpha+}=\{|x|\le t^\alpha,\;y=t\}$ and the union $J^{\alpha+}_t$
of the line segments connecting the endpoints of $I_t^{\alpha+}$ with the point $(0,t^\beta)$.
Let $W^{\alpha-}_t=\{(x,y): (x,-y)\in W^{\alpha+}_t\}$ and $J^{\alpha-}_t=\{(x,y):(x,-y)\in J^{\alpha+}_t\}$.
Let $W^\alpha_t=W_t^{\alpha+}\cup W_t^{\alpha-}$ (shaded areas in Figure~\ref{fig:wt}b) and let $W^{\alpha}={\bigcup}_{t\ge 0}\, W^{\alpha}_t\subset \R^3$.
An \emph{$(\alpha,\beta)$-wedge} is the surface germ $E^{\alpha\beta}=\bigcup_{t\ge 0}\,J^\alpha_t$, where $J^\alpha_t=J^{\alpha+}_t\cup J^{\alpha-}_t$.

Note that the tangent cone at the origin of $W^\alpha$ is the set $\{(x,y,z): x=0; |y|\le z\}$.
\end{definition}

\begin{remark}\label{diagram}
\emph{We define a \emph{link diagram} in the same way as it is done in Knot Theory, choosing a generic projection of the topological link to some 2-dimensional plane in $\R^3$ (see \cite{Ka} for details). Two diagrams are equivalent if they can be related by a finite sequence of Reidemeister moves.}
\end{remark}

The following result is a special case of the finiteness theorem of Hardt (see \cite{Hardt}).

\begin{theorem}\label{finiteness}
Let $X$ be a semialgebraic surface germ. Then, for small $t>0$ and for any plane $\R^2$, such that the projections of the links $X\cap S_t$ are generic, the diagrams of the links $X\cap S_t$ are equivalent.
\end{theorem}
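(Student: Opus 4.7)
The plan is to reduce the claim to two classical inputs: Hardt's triviality theorem and the Reidemeister theorem from knot theory.

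First, I will apply Hardt's triviality theorem to the distance-to-origin map $\rho : \R^4 \to \R_{\ge 0}$, $\rho(x) = \|x\|$, together with the distinguished subset $X$. The output is a finite partition of $\R_{\ge 0}$ into semialgebraic strata over which the pair $(\R^4, X)$ is semialgebraically trivial. Near $0$, one of the strata is an open interval $(0, t_0)$, and Hardt provides a semialgebraic homeomorphism of pairs
$$\Phi : (S_{t_0/2}, X \cap S_{t_0/2}) \times (0, t_0) \;\longrightarrow\; \bigsqcup_{t \in (0, t_0)} (S_t, X \cap S_t),$$
compatible with $\rho$. Consequently, for every $t, t' \in (0, t_0)$, the pairs $(S_t, X \cap S_t)$ and $(S_{t'}, X \cap S_{t'})$ are ambient semialgebraically homeomorphic, and because $\Phi$ varies continuously in $t$, the links $X \cap S_t$ are ambient isotopic in $S^3 \cong S_t$ for all $t \in (0, t_0)$.

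Next, I will fix the projection plane $\R^2 \subset \R^3$ from the hypothesis. The set $T$ of parameters $t \in (0, t_0)$ for which the projection of $X \cap S_t$ is generic (an immersion whose only self-intersections are finitely many transverse double points, with no triple points or tangencies) is semialgebraic, hence a finite union of intervals and isolated points. Under the hypothesis of the theorem this set contains some subinterval $(0, t_1) \subset (0, t_0)$. For each $t \in (0, t_1)$ the link diagram $D_t$ of $X \cap S_t$ (with its over/under crossing information inherited from the projection) is well defined. Since the links $X \cap S_t$ are ambient isotopic for $t \in (0, t_1)$ and admit generic projections, the Reidemeister theorem asserts that the diagrams $D_t$ and $D_{t'}$ are related by a finite sequence of Reidemeister moves, i.e., they are equivalent.

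The only nontrivial semialgebraic input is Hardt's theorem, invoked twice: once to obtain ambient triviality of $(\R^4, X)$ near $0$, and once implicitly to ensure that the set of parameters for which the projection is generic is cofinal with $0$. The main conceptual obstacle is ensuring that ambient semialgebraic triviality of the pair delivers a genuine isotopy of the links in $S^3$, rather than merely an equivalence of unoriented knot types up to mirror: this follows because Hardt's trivialization $\Phi$ is a continuous family of homeomorphisms starting from the identity on $S_{t_0/2}$, hence is an ambient isotopy through orientation-preserving diffeomorphisms of $S^3$. Once this point is granted, Reidemeister's theorem transports the isotopy equivalence of the links to equivalence of their diagrams.
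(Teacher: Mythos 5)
Your proposal is correct and follows essentially the same route the paper intends: the paper gives no written proof, simply deriving the statement from Hardt's semialgebraic triviality theorem, which is exactly your first step, and your appeal to Reidemeister's theorem for generic projections of isotopic links is the implicit remaining ingredient (cf. Remark \ref{diagram}). The only nitpick is that Hardt's trivialization yields a continuous family of semialgebraic \emph{homeomorphisms}, not diffeomorphisms, but since the links are semialgebraic (hence tame) this does not affect the conclusion.
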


\begin{definition}\label{characteristic-band}\normalfont
 Let $F_K \subset S^{3}$ be a smooth semialgebraic embedded surface diffeomorphic to $S^{1} \times [-1,1]$,
such that the two components  $\widetilde{K}$ and $\widetilde{K}'$ of the boundary $\partial F_K$ of $F_K$ are isotopic to the same knot $K$
and the linking number (see \cite{Milnor}) of the components $\widetilde{K}$ and $\widetilde{K}'$ is zero.
The surface $F_K$ is called a \emph{characteristic band} of the knot $K$.
Let $\widetilde{Y}_K$ and $\widetilde{X}_K$ be the cones over $F_K$ and $\partial F_K$, respectively.
These cones are called \emph{characteristic cones} of the knot $K$.
\end{definition}

\begin{definition}\label{slice}\normalfont
Let $(\rho,l)$, where $\rho \in S^1$ and $l\in [-1,1]$, be coordinates in $F_K$.
Let $\xi=(\rho_0,0)$ be an interior point of $F_K$.
We define a \emph{slice} $S_K=\{(\rho,l)\in F_K,\;|\rho-\rho_0|\le\epsilon\}$.
\end{definition}

\begin{definition}\label{horn}\normalfont
Let $\beta> 1$ be a rational number.
The standard \emph{$\beta$-horn} in $\R^4$ is the set $C_\beta = \{(x,y,z,t)\in \R^4 \mid t\ge 0, \; x^2+y^2+z^2 = t^{2\beta}\}$.
The standard \emph{$\beta$-horn like neighborhood} of the positive $t$-axis is the set
$V_\beta = \{(x,y,z,t)\in \R^4 \mid t\ge 0, \; x^2+y^2+z^2 \le t^{2\beta}\}$.

If $\beta=1$ then $C_1=\{t\ge 0,\;x^2+y^2+z^2=t^2\}$ is a cone and $V_1=\{t\ge 0,\;x^2+y^2+z^2 \le t^2\}$ is a conical neighborhood of the positive $t$-axis.

The standard \emph{$\beta$-hornification} $\Xi_\beta:V_1\to V_\beta$ is defined as $\Xi_\beta(x,y,z,t)=(xt^\beta,yt^\beta,zt^\beta,t)$.

For an arc $\gamma\subset\R^4$, a conical neighborhood of $\gamma$ is the image $V_1(\gamma)$ of a semialgebraic bi-Lipschitz map $\Phi:V_1\to\R^4$ such that $\gamma$ is the image of the positive $t$-axis.
A $\beta$-horn like neighborhood of $\gamma$ is $V_\beta(\gamma)=\Phi(V_\beta)$,
and a $\beta$-hornification to $\gamma$ is the map $\Psi_\beta:V_1(\gamma)\to V_\beta(\gamma)$ defined as $\Psi_\beta=\Phi\,\Xi_\beta\,\Phi^{-1}$ (see Figure \ref{horn-curve}).
We may assume, by Valette's theorem, that $\Psi_\beta$ preserves the distance to the origin.
For a subset $S$ of $V_1(\gamma)$, the set $\Psi_\beta(S)$ is called a \emph{$\beta$-hornification} of $S$ to $\gamma$.
\end{definition}

\begin{figure}
\centering
\includegraphics[width=5.5in]{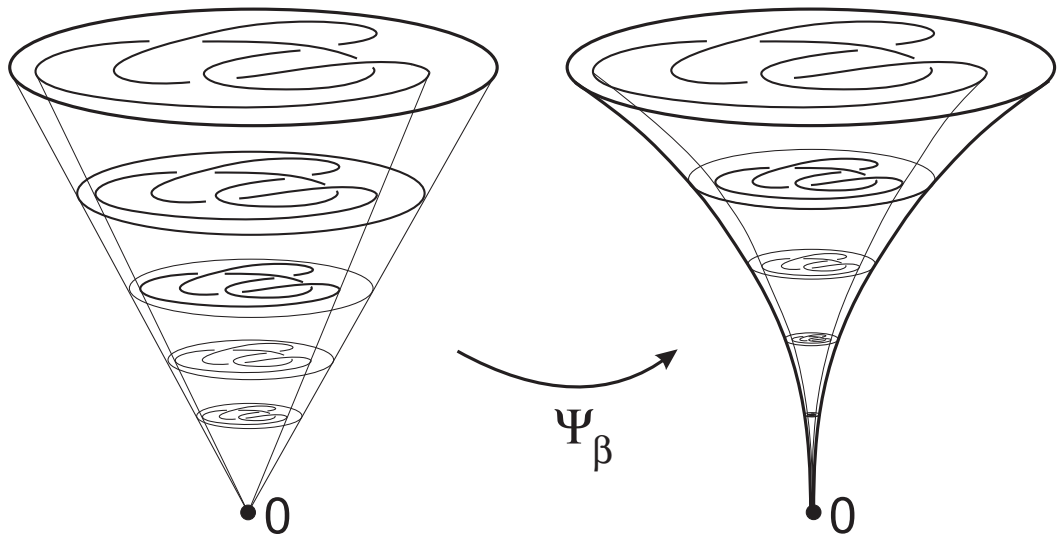}
\caption{Hornification of the cone over a knot}\label{horn-curve}
\end{figure}

\section{Metric Knots}\label{Metric Knots}.

\begin{figure}
\centering
\includegraphics[width=5in]{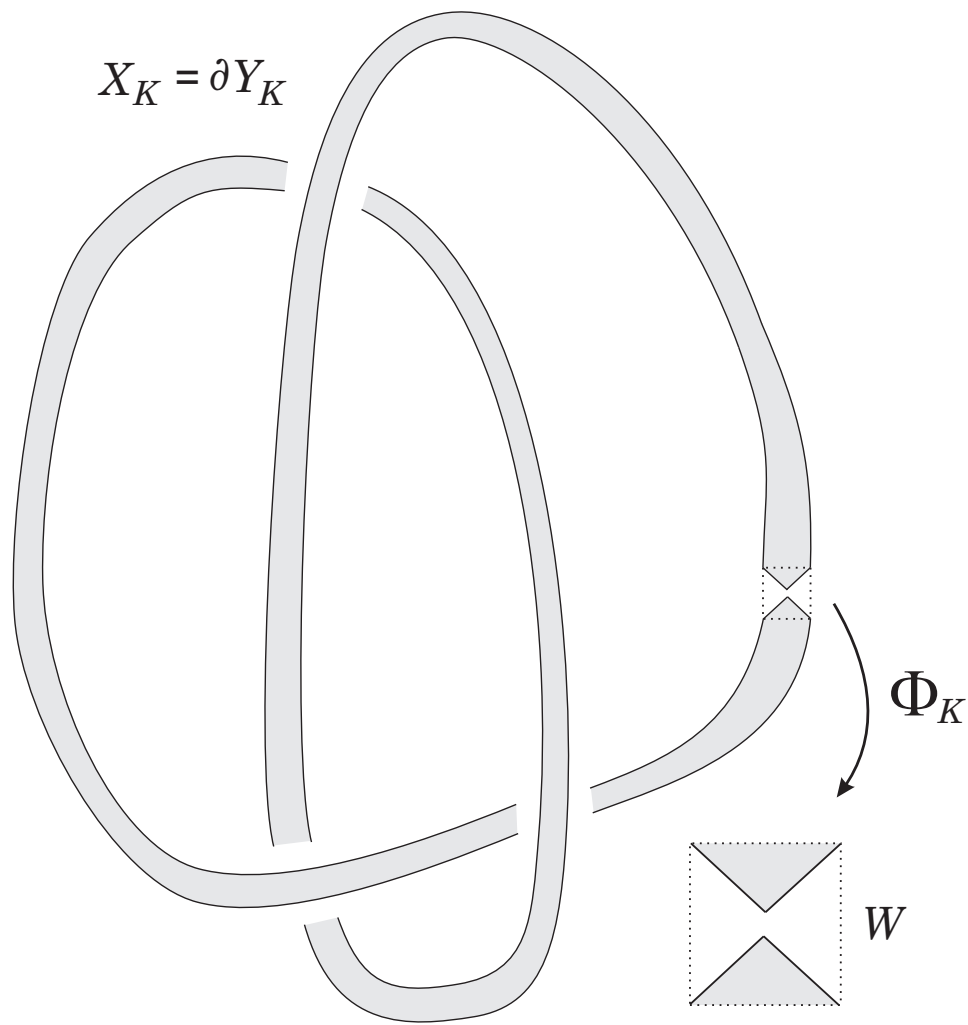}
\caption{The links of the sets $X_K=\partial Y_K$ and $W$ in the proof of Theorem \ref{universality}.}\label{fig:xk}
\end{figure}

\begin{theorem}[Universality Theorem]\label{universality}
Let $K \subset {S}^3$ be a knot. Then one can associate to $K$ a semialgebraic one-bridge surface germ $(X_K,0)$ in $\R^4$ so that the
following holds:\newline
$1) \ $ The link at the origin of each germ $X_K$ is a trivial knot;\newline
$2) \ $ All germs $X_K$ are outer bi-Lipschitz equivalent;\newline
$3) \ $ Two germs $X_{K_1}$ and $X_{K_2}$ are ambient semialgebraic bi-Lipschitz equivalent only if the knots $K_1$ and $K_2$ are isotopic.
\end{theorem}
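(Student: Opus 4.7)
For each knot $K$, I would begin with a characteristic band $F_K$ (Definition \ref{characteristic-band}) whose boundary components $\widetilde K, \widetilde K'$ are isotopic to $K$ and have linking number zero. The characteristic cone $\widetilde X_K$ over $\partial F_K$ is then a two-component conical germ. Fixing rational exponents $1 < \beta_1 < \beta_2$, I would embed $\widetilde X_K$ in a conical neighborhood $V_1(\gamma)$ of a smooth semialgebraic arc $\gamma \subset \R^4$ and apply the $\beta_1$-hornification $\Psi_{\beta_1}$, so that the two resulting components lie at mutual outer distance of order $t^{\beta_1}$ on each sphere $S^3_t$. Using a suitable semialgebraic bi-Lipschitz $C^1$-embedding $\Theta:Z\to\R^4$, I would attach a $(\beta_1,\beta_2)$-bridge whose image $\Theta(B_{\beta_1\beta_2})$ connects two short arcs on the two hornified components into a single circle-germ. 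Set $X_K$ to be this union. To see $X_K$ is a one-bridge surface germ, I would show that $X_K \cup \Theta(Z)$ is the $\beta_1$-hornification of the cone over an embedded disk (the annular slice of $F_K$ capped off by a transverse strip), hence normally embedded and semialgebraic metrically conical.

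\textbf{Verification of (1) and (2).} On each slice $S^3_t$, the curve $X_K\cap S^3_t$ bounds the embedded disk made from the annular slice of $F_K$ together with the transverse rectangular strip supplied by the bridge; the linking number zero hypothesis together with the embeddedness of $F_K$ ensures the disk itself embeds in $S^3_t$, so its boundary is unknotted, giving (1). For (2), observe that $\partial F_K$ is abstractly $S^1 \sqcup S^1$ with its standard arc-length metric for every $K$, so any abstract isometry $\partial F_{K_1} \to \partial F_{K_2}$ lifts through the characteristic cones and commutes with $\Psi_{\beta_1}$ up to uniform bi-Lipschitz distortion; extending across the two bridges by the canonical identification yields an outer bi-Lipschitz homeomorphism $X_{K_1}\to X_{K_2}$. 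The outer metric depends only on the hornification exponents and the bridge shape, not on the knot type.

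\textbf{Property (3).} Suppose $\widetilde H:(\R^4,0)\to(\R^4,0)$ is an orientation-preserving semialgebraic ambient bi-Lipschitz homeomorphism with $\widetilde H(X_{K_1})=X_{K_2}$. First I would invoke Theorem \ref{Sampaio} to conclude that the tangent cones are ambient semialgebraic bi-Lipschitz equivalent; since each tangent cone contains the wedge $\{|x|=|y|\le z\}$ arising from the bridge as a distinguished singular subgerm, the derivative of $\widetilde H$ must respect the bridge direction. Then I would apply Theorem \ref{Valette} to the restriction $\widetilde H|_{X_{K_1}}$ to replace it by a distance-preserving semialgebraic bi-Lipschitz map $h:X_{K_1}\to X_{K_2}$. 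For each small $t > 0$, $h$ induces a homeomorphism $X_{K_1}\cap S^3_t \to X_{K_2}\cap S^3_t$ sending bridge arc to bridge arc. Performing the saddle move (Definition \ref{saddle}) at the bridge disconnects the single circle into two arcs that close up to the hornified copies of $\widetilde K_j$ and $\widetilde K'_j$; the link at the origin of $S(X_{K_j})$ is therefore isotopic to $K_j \sqcup K_j$ (with linking number zero). Since the isotopy class of this saddle-moved link is by construction an ambient semialgebraic bi-Lipschitz invariant, $K_1 \sqcup K_1$ is isotopic to $K_2 \sqcup K_2$ in $S^3$, forcing $K_1$ isotopic to $K_2$.

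\textbf{Main obstacle.} The hardest step is proving that $\widetilde H$ canonically carries the bridge of $X_{K_1}$ onto the bridge of $X_{K_2}$. This requires characterizing the bridge intrinsically as the unique two-dimensional sub-germ along which the ratio of inner to outer distance blows up at a specific rate determined by $\beta_1$ and $\beta_2$, and combining this intrinsic characterization with the tangent-cone rigidity of Sampaio and the distance-preserving reduction of Valette. Once the bridge is pinned down as an ambient Lipschitz invariant, the saddle move becomes a geometrically canonical operation and the remainder of the proof reduces to the elementary fact that a knot is determined by a pair of its parallel copies with linking number zero.
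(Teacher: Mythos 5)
Your constructions of $X_K$ diverges from the paper's in a way that breaks the argument for part (3). You apply a global $\beta_1$-hornification to the whole two-component cone $\widetilde X_K$, squeezing it into a $\beta_1$-horn neighborhood $V_{\beta_1}(\gamma)$ with $\beta_1>1$. But then the tangent cone of everything you build (including the bridge, which joins two arcs lying $t^{\beta_1}$-close inside that neighborhood) is contained in the tangent cone of $V_{\beta_1}(\gamma)$, which is the single ray $C_0\gamma$ -- the same for every $K$. So your assertion that ``each tangent cone contains the wedge $\{|x|=|y|\le z\}$'' is false for your germ, and Theorem \ref{Sampaio} gives no information at all. The paper's construction is designed to do the opposite: no hornification is applied to $\widetilde X_K$; only the part of $\widetilde X_K$ inside the cone $M_K$ over a slice of $F_K$ is replaced by a bridge whose sheets dip to distance of order $t^{\beta}$ near the center of the slice. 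Consequently the link at the origin stays a single circle bounding the disk $\overline{F_K\setminus S_K}$ (unknotted), while the tangent cone is the cone over two copies of $K$ with one common point, and part (3) follows at once from Sampaio's theorem, with no saddle moves and no need to locate the bridge inside an ambient homeomorphism. Your parts (1) and (2) are essentially parallel to the paper's and are fine modulo routine details.

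Having lost the tangent-cone information, your proof of (3) rests entirely on the claim that the isotopy class of the link of the saddle-moved germ is ``by construction'' an ambient semialgebraic bi-Lipschitz invariant, which you then concede in your last paragraph is the main unresolved obstacle. That is not a deferrable detail: it is precisely Lemma \ref{bridgelemma} / Proposition \ref{saddle-saddle}, whose proof (Valette's theorem plus a local correction of the ambient map near the bridge) is carried out only for one-bridge surface germs, i.e.\ under the hypothesis that $X\cup\Theta(Z)$ is normally embedded and ambient semialgebraic metrically conical (Definition \ref{one-bridge}). Your germ is not shown to satisfy this, and your justification is incorrect: the $\beta_1$-hornification of a cone over a disk is \emph{not} metrically conical for $\beta_1>1$ (a $\beta$-horn is the standard example of a non-conical germ), so the step ``hence normally embedded and semialgebraic metrically conical'' fails, and with it the theorem's requirement that $(X_K,0)$ be a one-bridge surface germ. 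The fix is to abandon the global hornification and follow the slice-replacement construction: then the one-bridge property holds by inspection, and (3) is immediate from Theorem \ref{Sampaio}, since non-isotopic knots $K_1,K_2$ give tangent cones (cones over two copies of $K_i$ pinched at a point) that are not even ambient topologically equivalent.
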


\begin{proof}
Let $F_K \subset S^3$ be a characteristic band of the knot $K$, and let  $\widetilde{Y}_K$ and $\widetilde{X}_K$ be the
corresponding characteristic cones (see Definition \ref{characteristic-band}). Let $S_K\subset F_K$ be a slice (see Definition \ref{slice}).
Let $\varphi_K:S_K\to Z_1$, where $Z_1$ is the set $Z_t$ in Definition \ref{wedge-bridge} with $t=1$, be a semialgebraic bi-Lipschitz  homeomorphism $(\rho,l)\mapsto((\rho-\rho_0)/\epsilon,l)$.
Let $M_K=\{t\sigma: t\ge 0,\;\sigma\in S_K\}\subset\R^4$ be the cone over $S_K$.
We define a mapping $\Phi_K: M_K \to Z \subset\R^3$ as the corresponding mapping of the cones:
\begin{equation}\label{slice-cone}
\Phi_K(t\sigma)=t\varphi_K(\sigma)\;\text{for}\;\sigma\in S_K.
\end{equation}
 Note that $\Phi_K$ is a bi-Lipschitz homeomorphism. Let $W\subset\R^3$ be the set in Definition \ref{wedge-bridge}, and let
\begin{equation}\label{VKYKXK}
V_K = \Phi^{-1}_K(W),\quad Y_K = \left( \widetilde{Y}_K\setminus M_K \right) \cup V_K,\quad X_K=\partial Y_K.
\end{equation}

Then $X_K$ is a one-bridge surface germ, part of the surface germ $\widetilde{X}_K$ inside $M_K$
being replaced by a $\beta$-bridge $B_\beta$ (see Figure \ref{fig:xk}).
Let us show that $X_K$ satisfies the conditions of Theorem \ref{universality}. \vspace{0,1cm}

$1) \ $ The link at the origin of $X_K$ is a trivial knot, because it bounds the closure of $F_K \setminus S_K$ homeomorphic to a disk.

$2) \ $ Let $K_1$ and $K_2$ be any two knots. Let $\Psi: \widetilde{Y}_{K_1} \to \widetilde{Y}_{K_2}$ be a semialgebraic bi-Lipschitz map  sending each point  $(\rho,l,t)\in\widetilde{Y}_{K_1}$ to the point $(\rho,l,t)\in\widetilde{Y}_{K_2}$.
By definition $\Psi(M_{K_1})=M_{K_2}$. By the definition of the maps $\Phi_{K_1}$ and $\Phi_{K_2}$ (see (\ref{slice-cone})) we have $\Psi(Y_{K_1})=Y_{K_2}$ and $\Psi(X_{K_1}) = X_{K_2}$.

$3) \ $ Note that, for any knot $K$, the link of the tangent cone $C_0 X_K$ of the set $X_K$ is the union of two
knots isotopic to $K$, with a single common point.
Thus if $K_1$ and $K_2$ are not isotopic, then the tangent cones $C_0 X_{K_1}$ and $C_0 X_{K_2}$ are not ambient
topologically equivalent. This contradicts Sampaio's
theorem  \cite{S} (see also Theorem \ref{Sampaio}) which implies that tangent cones of ambient Lipschitz equivalent semialgebraic sets are ambient Lipschitz equivalent.
In our case, the links of the tangent cones are not even ambient topologically equivalent.

This concludes the proof of Theorem \ref{universality}.
\end{proof}

\begin{definition}\label{band-bridge} \normalfont
A surface germ $X_K$ obtained by the above construction is called a \emph{band-bridge surface germ} corresponding to the knot $K$ and a $\beta$-bridge (or a $(\beta_1,\beta_2)$-bridge as in the proof of Theorem \ref{twist} below).
\end{definition}

\begin{definition}\label{saddle}
\normalfont
Consider the $(\beta_1,\beta_2)$-bridge $B_{\beta_1\beta_2}=\bigcup_{t\ge 0}\bar{J}_t$
(see Definition \ref{bridge}). The set $\bar{J}_t$ has two components $\bar{J}^+_t$ and $\bar{J}^-_t$,
each of them consisting of three line segments connecting the points $p_1(t),\,p_2(t),\,p_3(t),\,p_4(t)$ and $p'_1(t),\,p'_2(t),\,p'_3(t),\,p'_4(t)$, respectively (see Figure \ref{fig:pic4}).
Let $\hat{J}_t$ be the set obtained by replacing the line segments $[p_2(t),p_3(t)]$ and $[p'_2(t),p'_3(t)]$ in $\bar{J}_t$ with the line segments $[p_2(t),p'_2(t)]$ and $[p_3(t),p'_3(t)]$ (see Figure~\ref{pic6}a).
Let $S_{\beta_1\beta_2}=\bigcup_{t\ge 0}\hat{J}_t$.
Let $X$ be a one-bridge surface germ (see Definition \ref{one-bridge}). Replacing $\widetilde{B}=\Theta(B_{\beta_1\beta_2})\subset X$  with $\widetilde{S}=\Theta(S_{\beta_1\beta_2})$, we obtain a new surface germ $S(X)$. This defines the \emph{saddle move} operation applied to $X$.
\end{definition}

\begin{figure}
\centering
\includegraphics[width=4in]{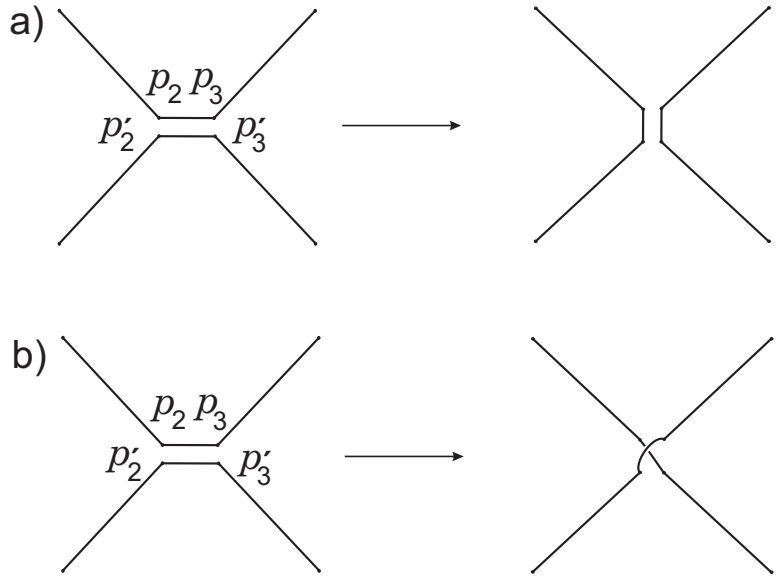}
\caption {a) The saddle move. b) The crossing move.}\label{pic6}
\end{figure}

\begin{lemma}\label{bridgelemma}
Let $X_1$ and $X_2$ be semialgebraic ambient bi-Lipschitz equivalent one-bridge surface germs.
Then the surface germs $S(X_1)$ and $S(X_2)$, obtained by the saddle move applied to $X_1$ and $X_2$, are ambient topologically equivalent, the links at the origin $L_{S(X_1)}$ and $L_{S(X_2)}$  are isotopic as topological links in $S^3$, and the diagrams of the links $L_{S(X_1)}$ and $L_{S(X_2)}$ are equivalent.
\end{lemma}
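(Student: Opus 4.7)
Let $H:(\R^4,0)\to(\R^4,0)$ be the orientation-preserving semialgebraic ambient bi-Lipschitz homeomorphism with $H(X_1)=X_2$, which by Valette's theorem (Theorem \ref{Valette}) may be taken to preserve the distance to the origin. Write $\widetilde B_i=\Theta_i(B_{\beta_1\beta_2})$ for the bridge and $\widetilde S_i=\Theta_i(S_{\beta_1\beta_2})$ for the saddle surface inside the pillow $\Theta_i(Z)$, so that $S(X_i)=(X_i\setminus\widetilde B_i)\cup\widetilde S_i$. The plan is first to modify $H$ by an orientation-preserving ambient homeomorphism $\Phi$ of $\R^4$ preserving $X_2$ into a map $\widehat H=\Phi\circ H$ with $\widehat H(\Theta_1(Z))=\Theta_2(Z)$ and $\widehat H(\widetilde B_1)=\widetilde B_2$, and then to adjust $\widehat H$ inside the pillow so that it carries $\widetilde S_1$ onto $\widetilde S_2$.

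The first step is to match the pillows. Because $H$ is bi-Lipschitz it preserves outer orders of contact between arcs, so the composition $H\circ\Theta_1$ is a bi-Lipschitz embedding of $Z$ into $\R^4$ whose image is again a pillow for $X_2$ with the same parameters $\beta_1,\beta_2$: the image $H(\widetilde B_1)$ is a $(\beta_1,\beta_2)$-bridge in $X_2$, and the set $X_2\cup H(\Theta_1(Z))$, being the image of the normally embedded, ambient metrically conical set $X_1\cup\Theta_1(Z)$ under the bi-Lipschitz map $H$, is itself normally embedded and ambient metrically conical. The bridge region of the one-bridge surface germ $X_2$ is metrically distinguished, being the only place where distinct arcs of $X_2$ attain outer order of contact greater than one; hence $H(\widetilde B_1)$ and $\widetilde B_2$ cover the same piece of $X_2$. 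Taking the composition of the inverses of the two conification maps associated with the pillows $H(\Theta_1(Z))$ and $\Theta_2(Z)$ (both of which witness the ambient conicity hypothesis), suitably normalized so as to restrict to the identity on $L_{X_2}$, produces the desired orientation-preserving ambient homeomorphism $\Phi$ of $\R^4$ preserving $X_2$ and sending $H(\Theta_1(Z))$ onto $\Theta_2(Z)$ and $H(\widetilde B_1)$ onto $\widetilde B_2$.

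The second step is to upgrade $\widehat H=\Phi\circ H$ to a homeomorphism realising $S(X_1)\cong S(X_2)$. Outside the pillows, $\widehat H$ already sends $S(X_1)$ to $S(X_2)$, because the surface germs agree there with $X_i$. Inside the pillow, $\widehat H|_{\Theta_1(Z)}$ conjugates via $\Theta_1,\Theta_2$ to an orientation-preserving self-homeomorphism $\theta:Z\to Z$ carrying $B_{\beta_1\beta_2}$ to itself. The difference between $S_{\beta_1\beta_2}$ and $B_{\beta_1\beta_2}$ inside $Z$ is a purely two-dimensional surgery, replacing the edges $[p_2(t),p_3(t)]$ and $[p'_2(t),p'_3(t)]$ by the edges $[p_2(t),p'_2(t)]$ and $[p_3(t),p'_3(t)]$ while fixing the corner arcs through $p_1,p_4,p'_1,p'_4$. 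Since $\theta$ preserves those four corner arcs, a standard planar ambient isotopy of $Z$ supported near the bridge modifies $\theta$ into $\theta'$ with $\theta'(S_{\beta_1\beta_2})=S_{\beta_1\beta_2}$. Conjugating back and extending by $\widehat H$ outside the pillow yields an orientation-preserving ambient homeomorphism of $\R^4$ sending $S(X_1)$ onto $S(X_2)$, establishing the first claim of the lemma.

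The remaining two claims follow from standard arguments. The ambient homeomorphism of $\R^4$ between $S(X_1)$ and $S(X_2)$ implies, via the well-definedness of the link at the origin (Remark \ref{well-defined}), that their links at the origin are ambient topologically equivalent in $S^3$; since the homeomorphism is orientation-preserving and every orientation-preserving self-homeomorphism of $S^3$ is isotopic to the identity, this equivalence upgrades to an ambient isotopy of $S^3$ carrying $L_{S(X_1)}$ onto $L_{S(X_2)}$, and Theorem \ref{finiteness} then gives the equivalence of link diagrams. The main obstacle of the plan is the first step: the bi-Lipschitz map $H$ is not a priori adapted to the specific pillow embeddings $\Theta_i$, and matching them up forces us to rely on the metric rigidity of the $(\beta_1,\beta_2)$-bridge together with the ambient conicity hypothesis built into the definition of a one-bridge surface germ. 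Once the pillows are matched, the saddle surgery is a routine two-dimensional isotopy.
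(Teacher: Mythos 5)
Your overall strategy differs from the paper's: you try to first correct $H$ so that it maps the pillow $\Theta_1(Z)$ and the bridge $\widetilde B_1$ \emph{exactly} onto $\Theta_2(Z)$ and $\widetilde B_2$, and then perform the saddle surgery in the standard model $Z$. The paper never attempts such an exact matching; after the Valette normalization it only shows, via the contact orders ($tord(\widetilde Q_2,\widetilde Q_3)=tord(V_2,V_3)=\beta_1$ and $tord(\widetilde Q_2,V_2)=tord(\widetilde Q_3,V_3)=\beta_2$), that the images under $H$ of the two new saddle segments and the segments $V_2,V_3$ of $X_2$ lie, with matching endpoints, in two \emph{disjoint} conical families of balls of radius $t^{\tilde\beta}$, $\beta_1<\tilde\beta<\beta_2$, around the pinch of $X_2$, and then corrects $H$ only inside those balls by level-preserving homeomorphisms equal to the identity on their boundaries. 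Your final paragraph (isotopy of the links at the origin via orientation-preserving homeomorphisms of $S^3$, equivalence of diagrams via Theorem \ref{finiteness}) is fine, and your step 2 would be a routine surgery \emph{if} step 1 were available.

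The genuine gap is step 1, which is exactly where the difficulty of the lemma sits. First, $H(\widetilde B_1)$ and $\widetilde B_2$ need not be the same subset of $X_2$: what is metrically distinguished is only the germ of abnormal contact (the pinch), while the bridges also contain normally embedded parts, so ``cover the same piece of $X_2$'' is not correct as stated and does not by itself give a map carrying one bridge onto the other. Second, the proposed construction of $\Phi$ as a ``composition of the inverses of the two conification maps'' does not produce the required map: the conification homeomorphisms carry the unions $X_2\cup H(\Theta_1(Z))$ and $X_2\cup\Theta_2(Z)$ onto straight cones over two a priori different links, so to compose them you already need an ambient homeomorphism between these cones that respects the decomposition into the $X_2$-part and the pillow-part and matches the bridges --- which is essentially the statement you are trying to prove; moreover, the image of $X_2$ under a conification of the \emph{union} need not be a straight cone over $L_{X_2}$, so ``normalizing to be the identity on $L_{X_2}$'' is not meaningful without further argument, and nothing in the construction forces $\Phi(H(\widetilde B_1))=\widetilde B_2$ or even $\Phi(X_2)=X_2$ in a controlled way. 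Without some quantitative input (such as the paper's order-of-contact localization near the pinch, or an equivalent argument), the exact pillow-and-bridge matching is unsubstantiated, and the rest of your proof has nothing to stand on.
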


\begin{proof} Let $Z\subset\R^3$ be as in Definitions \ref{wedge-bridge} and \ref{bridge}.
Let $\Theta_1:Z\to\R^4$ and $\Theta_2:Z\to\R^4$ be bi-Lipschitz embeddings such that $\widetilde{B}_1=\Theta_1(B_{\beta_1\beta_2})\subset X_1$ and $\widetilde{B}_2=\Theta_2(B_{\beta_1\beta_2})\subset X_2$.
Since $X_1$ and $X_2$ are one-bridge surfaces, we can suppose that $X_1\cup \Theta_1(Z)$ and $X_2\cup \Theta_2(Z)$ are straight cones over their links.
Let $H:\R^4 \to \R^4$ be a bi-Lipschitz homeomorphism isotopic to identity such that $H(X_1)=X_2$. By Valette's Theorem \cite{V} (see also Theorem \ref{Valette}) we may suppose that $H$ preserves the distance to the origin, and that the maps $\Theta_1$ and $\Theta_2$ send each section $Z_t$ of $Z$ to the sphere $S_t$ of radius $t$ centered at the origin.

Let $\widetilde{P}_1=\Theta_1(P)$ and $\widetilde{P}_2=\Theta_2(P)$, where $P=\bigcup_{t\ge 0} P_t\subset B_{\beta_1\beta_2}$ (see Definition \ref{bridge})
and let $\widetilde{P}_1(t)=\Theta_1(P_t)=\widetilde{P}_1\cap S_t$ and $\widetilde{P}_2(t)=\Theta_2(P_t)=\widetilde{P}_2\cap S_t$.
Since the tangent cone $C_0P$ of $P$ is the positive $z$-axis,
the tangent cones $C_0\widetilde{P}_1$ and $C_0\widetilde{P}_2$ of $\widetilde{P}_1$ and $\widetilde{P}_2$ are rays in $\R^4$.
For a small positive $\epsilon$, let $N_t\subset S_t$ be a ball of radius $\epsilon t$ centered at the point $C_0\widetilde{P}_2\cap S_t$,
and let $N=\bigcup_{t\ge 0} N_t$ be a conical $\epsilon$-neighbourhood of $C_0\widetilde{P}_2$.
Note that $\widetilde{P}_2\subset N\cap X_2\subset\widetilde{B}_2$ for small $\epsilon>0$.

Let $p_2(t),\,p'_2(t),\,p_3(t),\,p'_3(t)$ be the boundary points of $P_t$ (see Definition \ref{bridge}).
Let $q_2(t)=\Theta_1(p_2(t))$, $q'_2(t)=\Theta_1(p'_2(t))$, $q_3(t)=\Theta_1(p_3(t))$, $q'_3(t)=\Theta_1(p'_3(t))$  be the
boundary points of $\widetilde{P}_1(t)$,
and let $v_2(t)=\Theta_2(p_2(t))$, $v'_2(t)=\Theta_2(p'_2(t))$, $v_3(t)=\Theta_2(p_3(t))$, $v'_3(t)=\Theta_2(p'_3(t))$ be
the boundary points of $\widetilde{P}_2(t)$.
Then $\tilde q_2(t)=H(q_2(t))$, $\tilde q'_2(t)=H(q'_2(t))$, $\tilde q_3(t)=H(q_3(t))$, $\tilde q'_3(t)=H(q'_3(t))$ are
the boundary points of $H(\widetilde{P}_1(t))$.

The saddle move operation applied to $X_1$ replaces $\widetilde{P}_1$ with $Q=\bigcup_{t\ge 0} Q_t$,
where $Q_t=Q_2(t)\cup Q_3(t)$, $Q_2(t)=\Theta_1([p_2(t),p'_2(t)])$, $Q_3(t)=\Theta_1([p_3(t),p'_3(t)])$.
The saddle move operation applied to $X_2$ replaces $\widetilde{P}_2$ with
$V=\bigcup_{t\ge 0} V_t$, where $V_t=V_2(t)\cup V_3(t)$, $V_2(t)=\Theta_2([p_2(t),p'_2(t)])$, $V_3(t)=\Theta_2([p_3(t),p'_3(t)])$.
Let $\widetilde{Q}=H(Q),\,\widetilde{Q}_2=H(Q_2),\,\widetilde{Q}_3=H(Q_3)$.
Note that the boundary points $\tilde q_2(t),\,\tilde q'_2(t),\,\tilde q_3(t),\,\tilde q'_3(t)$ of $\widetilde{Q}_t$ are the same as the boundary points of $H(\widetilde{P}_1(t))$,
and the boundary points $v_2(t),\,v'_2(t)$, $v_3(t),\,v'_3(t)$ of $V_t$ are the same as the boundary points of $\widetilde{P}_2(t)$.
In particular, all these points belong to the bridge $\widetilde{B}_2$ of $X_2$, and to the $\epsilon t$-ball $N_t$ (see Figure \ref{pic7}).

Note that $tord(\widetilde{Q}_2,\widetilde{Q}_3)=tord(V_2,V_3)=\beta_1$ and $tord(\widetilde{Q}_2,V_2)=tord(\widetilde{Q}_3,V_3)=\beta_2$. Consider
$diam(V_2(t)),\, diam(V_3(t)),\, diam(\widetilde{Q}_2(t)),\,diam(\widetilde{Q}_3(t))$ as functions of $t$. Note that the order of all of these functions at the origin is $\beta_2$. Let $N_2$ be the family of balls $N_{2,t}$ on $S_t$ centered at $v_2(t)$ with the radius $t^{\tilde{\beta}}$ for $\tilde{\beta} \in (\beta_1,\beta_2)$, and let $N_3$ be the family of balls $N_{3,t}$ on $S_t$ centered at $v_3(t)$ with the radius $t^{\tilde{\beta}}$. Clearly $N_{2,t}\cap N_{3,t} = \emptyset$ and also $\widetilde{Q}_2(t)\subset N_{2,t},\, V_2(t)\subset N_{2,t},\, \widetilde{Q}_3(t)\subset N_{3,t},\,V_2(t)\subset N_{3,t}$.

Since $\widetilde{Q}_2(t),\,\widetilde{Q}_3(t),\,V_2(t),\,V_3(t)$ are homeomorphic to segments, there exists a homeomorphism $\bar{H}_2:N_1 \to N_1$ isotopic to identity, such that : \newline
1. $\bar{H}_2$ maps the sections $z=t$ to the sections $z=t$.\newline
2. $\bar{H}_2$ is identity on the boundary of $N_1$.\newline
3. $\bar{H}_2(H(\widetilde{Q}_2))=V_2$.\newline
4. The bridge $\widetilde{B}_2=H(\widetilde{B}_1)$ is invariant under $\bar{H}_2$.\newline
Similarly, there exists a homeomorphism $\bar{H}_3:N_2 \to N_2$ isotopic to identity, such that : \newline
1. $\bar{H}_3$ maps the sections $z=t$ to the sections $z=t$.\newline
2. $\bar{H}_3$ is identity on the boundary of $N_2$.\newline
3. $\bar{H}_3(H(\widetilde{Q}_3))=V_3$.\newline
4. The bridge $\widetilde{B}_2=H(\widetilde{B}_1)$ is invariant under $\bar{H}_3$.\newline
Then we define a homeomorphism $H':R^4\to R^4$ to be equal to $H$ outside $H^{-1}(N_1\cup N_2)$, to $\bar{H}_2*H$ on $H^{-1}(N_1)$ and to $\bar{H}_3*H$ on $H^{-1}(N_2)$, thus $H'(S(X_1))=S(X_2)$.
This proves that $S(X_1)$ and $S(X_2)$ are ambient topologically equivalent, and the links at the origin $L_{S(X_1)}$ and $L_{S(X_2)}$  are isotopic as topological links.
\end{proof}

\begin{figure}
\centering
\includegraphics[width=5in]{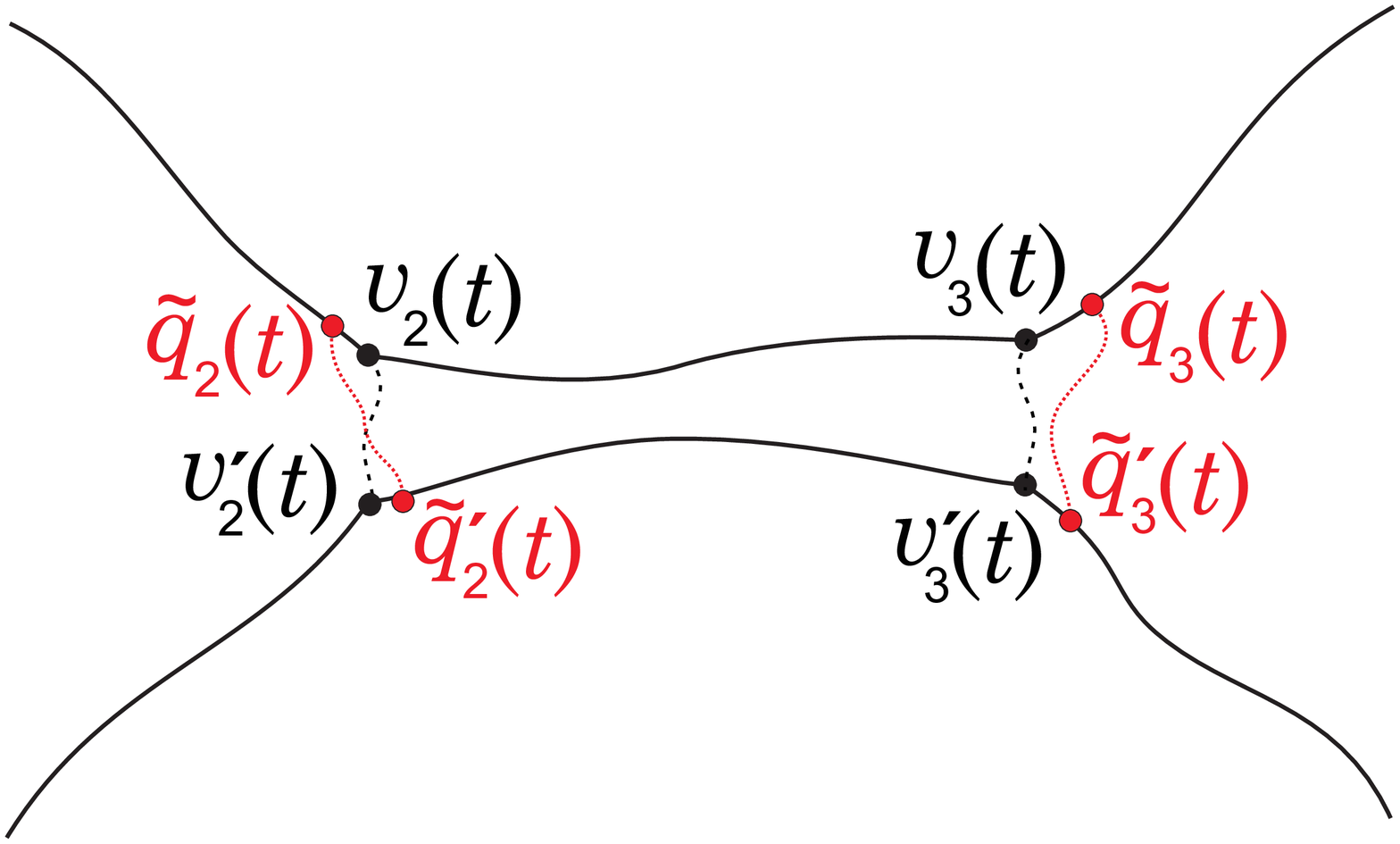}
\caption {The images by the map $H$ in the proof of Lemma \ref{bridgelemma}.}\label{pic7}
\end{figure}
\begin{theorem}\label{twist}
For any knot $K\subset {S}^3$ and all integers $i\ge 0$, there exist semialgebraic surface germs $(X'_{K,i},0)$ in $\R^4$ such that:

$1) \ $ The tangent cones at the origin of all $X'_{K,i}$ are topologically equivalent  to the cone over two knots isotopic to
$K$ with a single common point.

$2) \ $ All $X'_{K,i}$ are outer bi-Lipschitz equivalent.

$3) \ $ $ X'_{K,i}$ and $X'_{K,j}$ are semialgebraic ambient bi-Lipschitz equivalent only when $i=j$.
\end{theorem}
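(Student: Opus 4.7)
The plan is to modify the band-bridge construction from Theorem \ref{universality} by replacing the straight $\beta$-bridge with a twisted $(\beta_1,\beta_2)$-bridge, where the twist parameter encodes the integer $i$. Fix rationals $1<\beta_1<\beta_2$ and consider the $(\beta_1,\beta_2)$-bridge $B_{\beta_1\beta_2}$ together with its inner roof $P=\bigcup_{t\ge 0}P_t$ from Definition \ref{bridge}. I would construct a semialgebraic bi-Lipschitz embedding $\Theta_i:Z\to\R^4$ that coincides with the straight embedding $\Theta_0$ of Theorem \ref{universality} outside a $\beta_2$-horn-like neighborhood of the tangent ray $C_0P$, and which rotates transverse slices inside that horn around $C_0P$ by a total angle of $2\pi i$, using a semialgebraic spinning map supported on the horn. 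Defining $X'_{K,i}$ by the formulas (\ref{VKYKXK}) with $\Theta_0$ replaced by $\Theta_i$ gives the desired family.

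For condition $1)$, because the twist is supported inside a $\beta_2$-horn around an arc whose tangent cone is a single ray, it disappears in the tangent cone, so $C_0X'_{K,i}=C_0X_K$; by the tangent-cone analysis in part $3)$ of the proof of Theorem \ref{universality}, this is the cone over two knots isotopic to $K$ meeting at a single point. For condition $2)$, the twist is realized by a semialgebraic bi-Lipschitz self-homeomorphism of a $\beta_2$-horn that preserves the distance to the origin (this is where the hornification $\Psi_{\beta_2}$ of Definition \ref{horn} enters); composing such twists with the bi-Lipschitz map $\Psi$ from part $2)$ of Theorem \ref{universality}, which already transfers the band-bridge surface across different knots, shows that all $X'_{K,i}$ are outer bi-Lipschitz equivalent to one another (and to $X_K$).

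For condition $3)$, suppose $X'_{K,i}$ and $X'_{K,j}$ are semialgebraic ambient bi-Lipschitz equivalent. Both are one-bridge surface germs, so Lemma \ref{bridgelemma} gives that the saddle-move links $L_{S(X'_{K,i})}$ and $L_{S(X'_{K,j})}$ are isotopic in $S^3$. A direct diagrammatic analysis of how the saddle move acts on the twisted bridge shows that $L_{S(X'_{K,i})}$ is obtained from $L_{S(X'_{K,0})}$ by inserting $i$ full twists between the two arcs produced by the saddle move: for a trivial $K$ this yields the $(2,2i)$-torus link, and for a general $K$ it yields an $i$-twisted $2$-cable of $K$. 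The Jones polynomial of these links separates different values of $i$, so isotopy forces $i=j$.

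The main obstacle will be to make the twist genuinely semialgebraic and bi-Lipschitz while preserving both the tangent-cone identification and the outer-equivalence argument; the twist must act only at order $\beta_2$, i.e.\ inside a $\beta_2$-horn around $C_0P$, so that its contribution to the tangent cone is negligible but it still registers as a full twist after the saddle move. The subsequent step, identifying the saddle-move link and showing that its Jones polynomial depends nontrivially on $i$, reduces to the torus-link computation carried out later in the paper together with a standard cabling formula for the Jones polynomial.
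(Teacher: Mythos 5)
Your construction places the twist in the wrong direction, and this is fatal to condition $3)$. You twist the embedding of the bridge region inside a $\beta_2$-horn around the ray $C_0P$, with the rotation accumulating along the horn (i.e., in the radial direction), and you yourself note that this twist is realized by a semialgebraic bi-Lipschitz self-homeomorphism of the horn preserving the distance to the origin, supported in the horn. Extending that self-map by the identity outside the horn gives an ambient semialgebraic bi-Lipschitz homeomorphism of $(\R^4,0)$ carrying $X'_{K,0}$ to $X'_{K,i}$; so the very map you invoke to prove outer equivalence shows that all your germs are ambient semialgebraic bi-Lipschitz equivalent, which is the negation of condition $3)$. Correspondingly, your claim that the saddle move applied to your $X'_{K,i}$ yields the $(2,2i)$-torus link (or an $i$-twisted $2$-cable of $K$) is unsupported and in fact false for this construction: since the spinning map preserves the spheres $S_t$ and the bridge, it carries $S(X'_{K,0})\cap S_t$ to a set isotopic to $S(X'_{K,i})\cap S_t$, so the saddle-move links are isotopic for all $i$. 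A twist accumulated radially inside a horn is invisible to the tangent cone, to the saddle-move link, and to every ambient invariant in the paper; to be detectable, the two sheets must wind around each other \emph{within each sphere level} $S_t$.

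That is exactly what the paper's proof does, and where your proposal diverges: the bridge is left alone, and instead the characteristic band is twisted -- $F_K$ is cut along the slice $S_K$, given $i$ complete twists, and reglued, producing $F'_{K,i}$ whose two boundary curves have linking number $2i$; then the cone over $S_K$ is replaced by the $(\beta_1,\beta_2)$-bridge set $U$ as in Theorem \ref{universality}. Conditions $1)$ and $2)$ follow as before, and for $3)$ one uses that, by Remark \ref{rmk:saddle}, the link at the origin of $S(X'_{K,i})$ is isotopic to $\partial F'_{K,i}$, so the linking number $2i$ of its two components already distinguishes $i\neq j$, and Lemma \ref{bridgelemma} gives ambient non-equivalence; no Jones polynomial or cabling formula is needed here (that computation is a later refinement, Corollary \ref{specialcase}, not part of this proof). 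If you wish to keep the idea of twisting inside the bridge region, the twist would have to be distributed along the longitudinal direction of the bridge inside each level $Z_t$, which forces the image $\Theta_i(Z)$ out of any $3$-dimensional slice pattern and obliges you to re-verify that the result is still a one-bridge surface germ in the sense of Definition \ref{one-bridge} (normal embedding and ambient metric conicality of $X\cup\Theta_i(Z)$), none of which is addressed; as written, the radial twist cannot work.
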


\begin{proof}
Consider a characteristic band $F_K\subset S^3$, a slice $S_K\subset F_K$, and characteristic cones $\widetilde{Y}_K$ and $\widetilde{X}_K$ (see Definitions \ref{characteristic-band} and \ref{slice}).
We construct a band-bridge surface germ with a $(\beta_1,\beta_2)$-bridge corresponding to $K$ as follows.
Let $M_K=\{t\sigma: t\ge 0,\;\sigma\in S_K\}\subset\R^4$ be the cone over $S_K$ (as in Theorem \ref{universality}).
Let $\Phi_K: M_K\to Z\subset\R^3$ be the map defined in (\ref{slice-cone}):
\begin{equation}\label{slice-slice-cone}
\Phi_K(t\sigma)=t\varphi_K(\sigma)\;\text{for}\;\sigma\in S_K.
\end{equation}
Note that $\Phi_K$ is a bi-Lipschitz homeomorphism. For $1<\beta_1\le\beta_2$, let $U\subset\R^3$ be the set in Definition \ref{bridge}.
We define
\begin{equation}\label{new-map}
V'_{K,0} = \Phi^{-1}_K(U),\quad Y'_{K,0} = \left( \widetilde{Y}_K,\setminus M_K \right) \cup V'_{K,0}\quad X'_{K,0}=\partial Y'_{K,0}.
\end{equation}
The set $Y'_{K,0}$ is obtained by replacing the set $W$ (see Definition \ref{wedge-bridge}) with the set $U$ in construction
of the set $X_K$ in the proof of Theorem \ref{universality}. Let $X'_{K,0}=\partial Y'_{K,0}$ be its boundary (see Figure~\ref{fig:pic5}).
This construction replaces a $\beta$-bridge in Theorem \ref{universality} by a $(\beta_1,\beta_2)$-bridge.
In particular, the one-bridge surface germ $(X'_{K,0},0)$ satisfies conditions of Theorem \ref{universality}.

\begin{figure}
\centering
\includegraphics[width=4in]{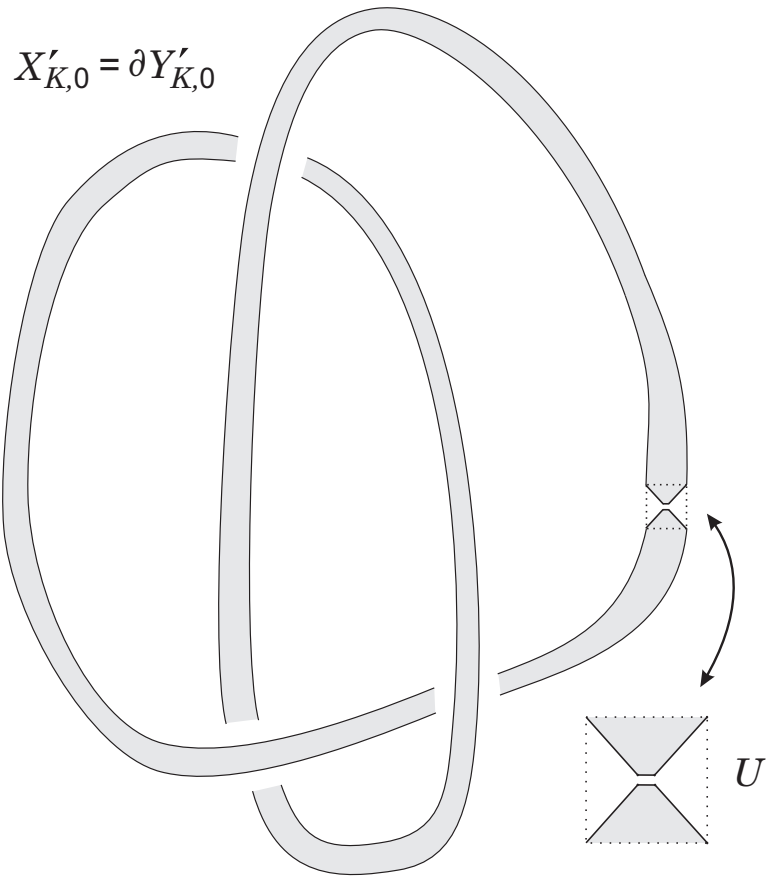}
\caption { The links of the surface $X'_{K,0}=\partial Y'_{K,0}$ and $U$ in the proof of Theorem \ref{twist}.}\label{fig:pic5}
\end{figure}

\begin{figure}
\centering
\includegraphics[width=5in]{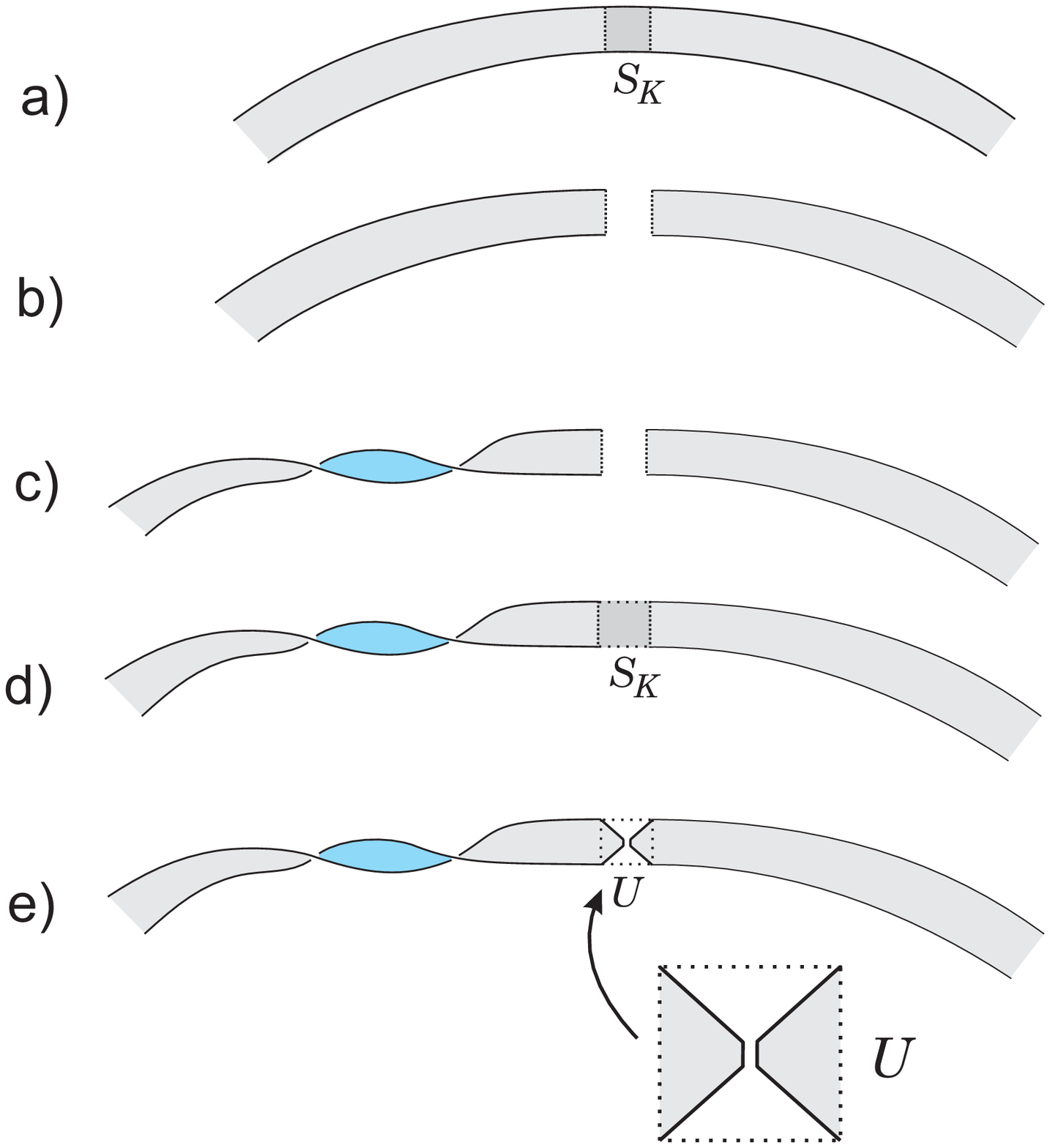}
\caption{Cut and twist in the proof of Theorem \ref{twist}.}\label{pic3}
\end{figure}

Let now $F'_{K,i}$ be the set obtained by removing the slice $S_K$ from $F_K$, making $i$
complete twists and adding $S_K$ back (see Figure~\ref{pic3}a-d).
Let $Y'_{K,i}$ be the set obtained from the cone over $F'_{K,i}$ by replacing the set $M_K$ (the cone over $S_K$) with the set $U$ (see Figure~\ref{pic3}e)
and let $X'_{K,i}=\partial Y'_{K,i}$ be its boundary.
The same arguments as in the proof of Theorem \ref{universality} show that the link of $X'_{K,i}$ is a
trivial knot and the tangent cone of $X'_{K,i}$
is a cone over the union of two knots isotopic to $K$, pinched at one point.

We are going to prove that $X'_{K,i}$ and $X'_{K,j}$ are not semialgebraic ambient bi-Lipschitz equivalent if $i\ne j$.
The result of the saddle move applied to each of these surface germs is a surface germ such that its tangent link is the union of two
copies of the knot $K$, with the linking number of the two copies being twice the number of complete twists.
 Thus the links $S(X'_{K,j})$ and $S(X'_{K,i})$ are not isotopic when $i\ne j$.
 It follows from Lemma \ref{bridgelemma} that surface germs $X'_{K,i}$ and $X'_{K,j}$ are ambient semialgebraic bi-Lipschitz equivalent when $i\ne j$.

Note that the topology of the tangent link of $X'_{K,i}$ does not depend on $i$. The tangent link is formed
by two copies of $K$ pinched at one point.
\end{proof}

\begin{remark}\label{rmk:saddle}\normalfont
Let $X'_{K,i}$ be the surface germ constructed in the proof of Theorem \ref{twist}.
Then the link at the origin of the surface germ $S(X'_{K,i})$, obtained from $X'_{K,i}$ by the saddle move, is a subset of $F'_{K,i}$ isotopic to $\partial F'_{K,i}$.
\end{remark}

\begin{proposition}\label{twist-twist}
Let $X'_{K,i}$ be a surface germ constructed in Theorem \ref{twist}, and let $S(X'_{K,i})$ be the surface germ
obtained by a saddle move applied to $X'_{K,i}$.
If $K$ is a trivial knot, then the link at the origin of $S(X'_{K,i})$ is a torus link.
\end{proposition}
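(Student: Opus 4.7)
The plan is to apply Remark \ref{rmk:saddle}, which identifies $L_{S(X'_{K,i})}$ with $\partial F'_{K,i}$ up to isotopy in $S^3$, and then to show that when $K$ is the unknot, $F'_{K,i}$ is isotopic to a concrete annulus lying on the standard unknotted torus in $S^3$.

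By Remark \ref{rmk:saddle}, the link at the origin of $S(X'_{K,i})$ is isotopic in $S^3$ to $\partial F'_{K,i}$, so the proposition reduces to showing that $\partial F'_{K,i}$ is a torus link when $K$ is trivial. In this case the core $C$ of $F_K$ is an unknot in $S^3$, and the definition of a characteristic band (Definition \ref{characteristic-band}) forces $F_K$ to be a $0$-framed annular neighborhood of $C$; I would fix it to be an annular neighborhood, on the standard torus $T=\partial N(C)\subset S^3$, of a Seifert longitude of $C$. The construction of $F'_{K,i}$ from $F_K$ (cut a slice, make $i$ complete twists, reglue) preserves the core $C$ but shifts the framing: per the computation in the proof of Theorem \ref{twist}, the boundary linking number of $F'_{K,i}$ is $2i$, so $F'_{K,i}$ is an annulus with core the unknot $C$ and framing $2i$.

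Next, I would invoke the standard fact that an embedded annulus in $S^3$ whose core is the unknot is determined up to ambient isotopy by its framing, a consequence of the classification of incompressible surfaces in the solid torus $S^3\setminus N(C)$. A canonical model for such an annulus with framing $2i$ is the annular neighborhood, inside $T=\partial N(C)$, of a curve of slope $2i$ in the meridian/longitude basis of $T$, i.e.\ a $(2i,1)$-torus curve. Its two boundary components are then parallel $(2i,1)$-curves on $T$, and two parallel copies of a $(2i,1)$-curve form, by definition, the $(2,4i)$-torus link.

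Therefore $\partial F'_{K,i}$ is isotopic to a $(2,4i)$-torus link, so the link at the origin of $S(X'_{K,i})$ is a torus link, as claimed. The main obstacle is the intermediate uniqueness step: rigorously showing that the cut-twist-reglue procedure yields exactly the framing shift asserted in the proof of Theorem \ref{twist}, and that an embedded annulus in $S^3$ with unknotted core is then determined up to ambient isotopy by that framing. Both points require translating the geometric cut-twist-reglue of Figure \ref{pic3} into the language of framed knots and appealing to the classification of incompressible annuli in the unknot complement.
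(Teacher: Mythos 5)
Your argument is correct and reaches the same picture as the paper, but by a different mechanism. The paper's proof also starts from Remark \ref{rmk:saddle}, but then it realizes the twisted band \emph{directly} on the unknotted torus $T_K=\partial N(K)$: the cut--twist--reglue surgery of Figure \ref{pic3} is reinterpreted as choosing a new angular coordinate $\psi_i$ on the same torus, so that $F'_{K,i}$ is exhibited as the sub-band $\{0\le\psi_i\le\pi\}\subset T_K$ whose two boundary curves lie on $T_K$ with linking number $2i$; being disjoint curves on an unknotted torus, they form a torus link, and no classification statement is needed. You instead encode $F'_{K,i}$ abstractly by its core (the unknot) and its framing ($2i$, quoting the linking-number computation from Theorem \ref{twist}), and then invoke the uniqueness of an embedded annulus in $S^3$ with unknotted core and prescribed framing to replace it by the canonical model on $T_K$. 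Your route has the advantage of being manifestly independent of which characteristic band of the unknot was chosen in the construction (the paper implicitly fixes the band $\{0\le\psi\le\pi\}\subset T_K$, which strictly speaking also needs such a uniqueness statement, or Definition \ref{characteristic-band} read as fixing that model); the paper's route is more elementary and constructive, since it never leaves the torus.

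Two caveats on your load-bearing step. The uniqueness claim is true and standard, but your justification via ``incompressible surfaces in the solid torus $S^3\setminus N(C)$'' is misplaced: the annulus contains its core $C$, so it does not live in the knot complement. The clean argument is via regular neighborhoods: a thickening $A\times[-1,1]$ of the embedded annulus $A$ is a solid torus which is simultaneously a regular neighborhood of the core $C$, hence (uniqueness of regular neighborhoods, $C$ unknotted) ambient isotopic to the standard unknotted solid torus, inside which $A$ sits as a flat annulus through the core; such annuli are classified up to isotopy of the solid torus by their boundary slope, i.e., by the framing. With that substitution, and keeping in mind that the identification of the boundary as the $(2,4i)$-torus link is finer than what Proposition \ref{twist-twist} asserts (only ``torus link'' is claimed, and only the linking number $2i$ is used later), your proof is complete.
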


\begin{proof} For a small $\epsilon>0$, the boundary of the $\epsilon$-neighbourhood of $K$ is an unknotted two-dimensional torus $T_K\subset S^3$.
One can define coordinates $(\phi,\psi)$ on $T_K$, where $\phi\in K,\,\psi\in S^1$, so that the curves $\tilde K=\{\phi\in K, \,\psi=0\}$
and $\tilde K'=\{\phi\in K, \,\psi=\pi\}$ have the linking number zero. Then $F_K=\{\phi\in K, \,0\le\psi\le\pi\}\subset T_K$ is
a characteristic band of the knot $K$ (see Definition \ref{characteristic-band}) bounded by the curves $\tilde K$ and $\tilde K'$.
If $X'_{K,0}$ is the surface germ constructed in Theorem \ref{twist}, then the
link at the origin of $S(X'_{K,0})$, isotopic to the union of $\tilde K$ and $\tilde K'$, is a trivial torus link.

The surgery for constructing a surface germ $X'_{K,i}$ in Theorem \ref{twist} (see Figure \ref{pic3}) corresponds to the choice of a coordinate system $(\phi,\psi_i)$ on $T_K$ such that the band $F_{K,i}=\{\phi\in K, \,0\le\psi_i\le\pi\}\subset T_K$ is bounded by the curves $\tilde K_i=\{\phi\in K,\,\psi_i=0\}$ and $\tilde K'_i=\{\phi\in K,\,\psi_i=\pi\}$ with the linking number $2i$.
Since the link at the origin of $S(X'_{K,i})$ is isotopic to the union of $\tilde K_i$ and $\tilde K'_i$ (see Remark \ref{rmk:saddle}) it is a torus link.
\end{proof}

\begin{proposition}\label{saddle-saddle}
Let $X_1$ and $X_2$ be two one-bridge surface germs. If the germs are ambient bi-Lipschitz equivalent,
then the links of the origin  $L_{S(X_1)}$ and $L_{S(X_2)}$ are isotopic.
\end{proposition}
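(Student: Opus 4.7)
This proposition is the isotopy conclusion of Lemma~\ref{bridgelemma} repackaged as a standalone statement for later use in Section~\ref{invariants}, where the Jones polynomial of $L_{S(X)}$ will be promoted to an ambient Lipschitz invariant of $X$. Since the paper works throughout in the semialgebraic category (as noted at the end of Section~1), the phrase ``ambient bi-Lipschitz equivalent'' is synonymous with ``ambient semialgebraic bi-Lipschitz equivalent,'' which is precisely the hypothesis of Lemma~\ref{bridgelemma}. The proof plan is therefore to cite Lemma~\ref{bridgelemma} and observe that its conclusion already asserts the isotopy of $L_{S(X_1)}$ and $L_{S(X_2)}$ as topological links in $S^3$.

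To unpack what the lemma supplies: starting from a semialgebraic, orientation-preserving ambient bi-Lipschitz homeomorphism $H:(\R^4,0)\to(\R^4,0)$ with $H(X_1)=X_2$, one first invokes Valette's Theorem~\ref{Valette} to arrange that $H$ preserves the distance to the origin. Lemma~\ref{bridgelemma} then constructs local corrections $\bar H_2,\bar H_3$ on horn-like $\epsilon t$-neighborhoods around the tangent rays of $\widetilde P_2$ and $\widetilde P_3$; since these corrections are built to preserve each sphere $S_t$, the patched homeomorphism $H':\R^4\to\R^4$ with $H'(S(X_1))=S(X_2)$ is still distance-to-origin preserving. Restricting $H'$ to a small sphere $S^3_\varepsilon$ gives an orientation-preserving homeomorphism of $S^3_\varepsilon$ carrying $L_{S(X_1)}$ onto $L_{S(X_2)}$, and since every orientation-preserving homeomorphism of $S^3$ is isotopic to the identity, the two links are isotopic as desired.

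The main potential obstacle---already resolved inside the proof of Lemma~\ref{bridgelemma}---is ensuring that the patched map $H'$ actually preserves the sphere foliation $\{S_t\}_{t\ge 0}$, so that the resulting map of links lives on a single round sphere rather than on a bi-Lipschitz perturbation of one. This is handled by combining Valette's theorem on the ambient map with the sphere-preserving design of $\bar H_2$ and $\bar H_3$. There is no additional content needed beyond invoking Lemma~\ref{bridgelemma}; the point of the proposition is simply to expose the clean Lipschitz-to-isotopy implication in a form suitable for feeding into topological knot invariants.
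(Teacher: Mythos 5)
Your proposal is correct and matches the paper's intent: the paper gives no separate proof of this proposition, treating it as an immediate consequence of Lemma~\ref{bridgelemma}, whose conclusion already contains the isotopy of $L_{S(X_1)}$ and $L_{S(X_2)}$, with the semialgebraicity needed to invoke Valette's theorem supplied by the paper's standing convention that all maps are semialgebraic. Your unpacking of the sphere-preserving corrections is consistent with the lemma's proof and adds nothing that conflicts with it.
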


\begin{remark}\label{saddle-diagrams}\normalfont
%Moreover there exists a two-dimensional plane $\widetilde{P}^2$, such that the diagrams of $L_{S(X_1)}$ and $L_{S(X_2)}$, constructed, using the projection to $\widetilde{P}^2$  can be obtained from
Saddle move on the level of knot diagrams is described as follows: $\left\langle\ris{-4}{-1.1}{10}{smoothing-rev}{-1.1}\right\rangle$ is replaced by $\left\langle\ris{-4}{-1.1}{10}{smoothing}{-1.1}\right\rangle$.

\end{remark}

Here we are going to define the crossing move, that will be useful for further calculations.

\begin{definition}\label{crossin}
\normalfont We proceed in a similar way to the definition of the saddle move. Consider the subset $B$ of a one-bridge surface $X$ outer bi-Lipschitz equivalent to a $(\beta_1,\beta_2)$-bridge
$B_{\beta_1\beta_2}=\bigcup_{t\ge 0}\bar{J}_t$ (see Definition \ref{bridge}). The set $\bar{J}_t$ has two components $\bar{J}^+_t$ and $\bar{J}^-_t$,
consisting of three line segments connecting the points $p_1(t),\,p_2(t),\,p_3(t),\,p_4(t)$ and $p'_1(t),\,p'_2(t),\,p'_3(t),\,p'_4(t)$, respectively, in the plane  $\{z=t, w=0\}$ (see Figure \ref{fig:pic4}). Let us embed this set to $\R^4$ with coordinates $(x,y,z,w)$.
Replacing the line segments $[p_2(t),p_3(t)]$ and $[p'_2(t),p'_3(t)]$ with the line segment
$[p_2(t),p'_3(t)]$ and a circle arc in the half-space $\{w\ge 0\}$ with the
ends at $p'_2(t)$ and $p_3(t)$, orthogonal to the plane $\{w=0\}$ (see Figure~\ref{pic6}b), we replace the set $\bar{J}_t$ with the set $\hat{J}_t$.
Let $\widehat{B}_{\beta_1\beta_2}=\bigcup_{t\ge 0}\hat{J}_t$.
Note that the surface germs $B_{\beta_1\beta_2}$ and $\widehat{B}_{\beta_1\beta_2}$ have the same boundary arcs.
Replacing the subset $B$ of $X$ with the subset $\widehat{B}$ outer bi-Lipschitz equivalent to $\widehat{B}_{\beta_1\beta_2}$,
so that $B$ and $\widehat{B}$ have the same boundary arcs, we get a new surface germ $C(X)$. 
This defines a \emph{crossing move} operation applied to $X$.
\end{definition}

\begin{remark}\label{crossing}\normalfont Crossing move on the level of knot diagrams is described as follows:
$\left\langle\ris{-4}{-1.1}{10}{smoothing-rev}{-1.1}\right\rangle$ is replaced by $\left\langle\ris{-4}{-1.1}{10}{L-bracket}{-1.1}\right\rangle$.

\end{remark}

\begin{remark} \label{crossing-remark}\normalfont
One can show that, for the fixed orientation on $L_X$, the isotopy class of the resulting knot or link is an ambient bi-Lipschitz invariant. 
However, in what follows we do not need this result.
\end{remark}

The next statement is a modification of the Universality Theorem.

\begin{theorem}\label{twoknots}

For any two knots $K$ and $L$, there exists a germ of a semialgebraic one-bridge surface germ  $X_{KL}$ such that:

1. The link of $X_{KL}$  at the origin is isotopic to $L$.

2. For a fixed knot $K$ all surface germs $X_{KL}$ have isotopic tangent links.
In particular, surface germs $X_{{K_1}L}$ and $X_{{K_2}L}$ are ambient semialgebraic bi-Lipschitz
equivalent only if the knots $K_1$ and $K_2$ are isotopic.
\end{theorem}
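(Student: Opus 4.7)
The plan is to modify the construction in Theorem \ref{universality} by replacing the characteristic band $F_K \subset S^3$ with a larger semialgebraic embedded surface $F_{KL} \subset S^3$. The surface $F_{KL}$ should agree with $F_K$ in a neighborhood of the slice $S_K$, so that the local structure at the origin---and hence the tangent link---is unaffected; away from $S_K$, $F_{KL}$ will be globally modified so as to change the link at the origin from the trivial knot produced by Theorem \ref{universality} into a knot isotopic to $L$.

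Concretely, I would take a characteristic band $F_K$ of $K$ with a slice $S_K$ as in the proof of Theorem \ref{universality}, and a semialgebraic Seifert surface $\Sigma_L \subset S^3$ of a knot isotopic to $L$, placed in a small ball disjoint from $F_K$. Connect $F_K$ and $\Sigma_L$ by a thin semialgebraic band $\eta$ whose short sides lie on $\partial F_K$ (at a point away from $S_K$) and on $\partial \Sigma_L$, with the interior of $\eta$ disjoint from $F_K \cup \Sigma_L$, and set $F_{KL} = F_K \cup \eta \cup \Sigma_L$. Now perform the construction of Theorem \ref{universality} verbatim with $F_{KL}$ in place of $F_K$: let $\widetilde{Y}_{KL}$ be the cone over $F_{KL}$, let $M_K$ be the cone over $S_K$ and $V_K = \Phi_K^{-1}(W)$ unchanged, and set $X_{KL} = \partial((\widetilde{Y}_{KL}\setminus M_K) \cup V_K)$.

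I expect the verification to split into three parts. First, since $F_{KL}$ and $F_K$ coincide in a neighborhood of $S_K$, the bridge and slice data are unchanged, so $X_{KL}$ is a semialgebraic one-bridge surface germ and its tangent cone at the origin is identical to that of $X_K$; by Theorem \ref{universality} its tangent link is the union of two copies of $K$ meeting at a single point, depending only on $K$. Second, the link at the origin of $X_{KL}$ bounds the modified surface $F_{KL}\setminus S_K = (F_K \setminus S_K) \cup \eta \cup \Sigma_L$, which is obtained from the disk $F_K\setminus S_K$ (bounding the trivial knot from Theorem \ref{universality}) by a band sum with the Seifert surface $\Sigma_L$; the resulting boundary circle is therefore isotopic to the connect sum of the unknot with $L$, i.e.\ to $L$, which gives property (1). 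Third, if $X_{K_1 L}$ and $X_{K_2 L}$ are ambient semialgebraic bi-Lipschitz equivalent, Sampaio's theorem (Theorem \ref{Sampaio}) implies that their tangent cones are ambient bi-Lipschitz (hence topologically) equivalent, so the tangent links must be isotopic; since each consists of two copies of the corresponding $K_i$ meeting at a single point, the knots $K_1$ and $K_2$ must be isotopic in $S^3$.

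The main subtle step is the second one: confirming that the band sum with $\Sigma_L$ really carries the unknot link at origin to a curve isotopic to $L$. The key is that the ball containing $\eta$ and $\Sigma_L$ is disjoint from $F_K$, so the modification is a clean connect sum with no extraneous linking or twisting; since the bridge and slice replacement of Theorem \ref{universality} is localized near $S_K$ while the band sum occurs far from $S_K$, the two operations commute, and the link at the origin of $X_{KL}$ is indeed isotopic to the connect sum of the Universality unknot with $L$, which is $L$.
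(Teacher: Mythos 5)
Your construction (band-summing a Seifert surface $\Sigma_L$ of $L$ onto the characteristic band in $S^3$ and then running the construction of Theorem \ref{universality} on $F_{KL}$) is close in spirit to the paper's proof, which instead glues a straight cone $Z_L$ over $L$ to the already constructed germ $X_K$ inside a conical neighbourhood $V(\gamma)$ of an arc $\gamma\subset X_K$, exchanging two H\"older triangles of contact order $1$. Both versions produce a one-bridge germ whose link at the origin is the connected sum of the unknot with $L$, hence isotopic to $L$; for your version this requires, as you indicate, that the band $\eta$ be short and standard (note the slip in your last paragraph: a ball containing $\eta$ cannot be disjoint from $F_K$, since $\eta$ attaches to $\partial F_K$; only $\Sigma_L$ can be confined to such a ball, and without such a choice a band sum need not be a connected sum).

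The genuine gap is your tangent cone computation. You claim that, because $F_{KL}$ agrees with $F_K$ near the slice, the tangent cone of $X_{KL}$ is identical to that of $X_K$, i.e.\ two copies of $K$ pinched at a point. This is false: away from the bridge, $X_{KL}$ is a straight cone over $\partial F_{KL}$, and the tangent cone of a straight cone is that cone itself, so the modification by $\eta\cup\Sigma_L$ --- which sits at spherical distance of order $1$ from the slice, i.e.\ at conical scale --- survives in the tangent cone. The tangent link of your $X_{KL}$ is the union of a copy of $K$ and a copy of $K\# L$ meeting at a single point, which is exactly what the paper obtains for its $X_{KL}$; to make the tangent link just $K$ one would have to push $Z_L$ into a $\beta$-horn neighbourhood with $\beta>1$, which is what Theorem \ref{twoknots-wedge} does, not this theorem. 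Consequently your final step, deducing $K_1\simeq K_2$ from ``two copies of $K_i$ meeting at a point,'' does not apply as written. It can be repaired: Sampaio's theorem still gives that the pinched tangent links $K_1\cup(K_1\# L)$ and $K_2\cup(K_2\# L)$ are ambient homeomorphic, so either $K_1\simeq K_2$, or $K_1\simeq K_2\# L$ together with $K_2\simeq K_1\# L$; in the latter case $K_1\simeq K_1\# L\# L$, which by additivity of genus under connected sum forces $L$ to be trivial and again yields $K_1\simeq K_2$. With the tangent-link computation corrected in this way (and the band chosen standard), your construction does prove the theorem.
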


\begin{proof} We use the construction from the proof of Theorem \ref{universality}.
Consider a characteristic band $F_K$, the characteristic cones $\widetilde{Y}_K$ and $\widetilde{X}_K$ (see Definition \ref{characteristic-band}).
Consider the surface germ $X_K$ defined in (\ref{VKYKXK}) for the knot $K$.
Let $\gamma\subset X_K$ be an arc not tangent to the set $V_K$ defined in (\ref{VKYKXK})
(i.e., $tord(\gamma',\gamma)=1$ for any $\gamma'\subset V_K$).
Let $V(\gamma)$ be a small conical neighbourhood of $\gamma$ in  $\R^4$, such that $V(\gamma)\cap X_K$ is a H\"older triangle.
Let us embed the straight cone $Z_L$ over $L$ inside $V(\gamma)$ so that its image $\tilde Z_L$ does not intersect $X_K$, and its is ambient topologically equivalent to $L$.
Let us choose two arcs $\gamma_1$ and $\gamma_2$ in $X_K\cap V(\gamma)$, and two arcs $\gamma'_1$ and $\gamma'_2$ in
$\tilde Z \cap V(\gamma)$, satisfying the following conditions:

a. $tord(\gamma_1,\gamma_2)=tord(\gamma'_1,\gamma'_2)=1$.

b. Replacing the union of the H\"older triangles $T(\gamma_1,\gamma_2)\subset X_K$ and $T(\gamma'_1,\gamma'_2)\subset\tilde Z$
with the union of H\"older triangles $T(\gamma_1,\gamma'_1)\subset V(\gamma)$ and $T(\gamma_2,\gamma'_2)\subset V(\gamma)$, as shown in Figure \ref{pic8},
we obtain a semialgebraic set $X_{KL}$ such that $X_{KL}\cap V(\gamma)$ is conical and its link is isotopic to the connected sum of $K$ and $L$.
Note that construction of $X_{KL}$ is similar to the saddle move construction in Definition \ref{saddle}.

\begin{figure}
\centering
\includegraphics[width=5in]{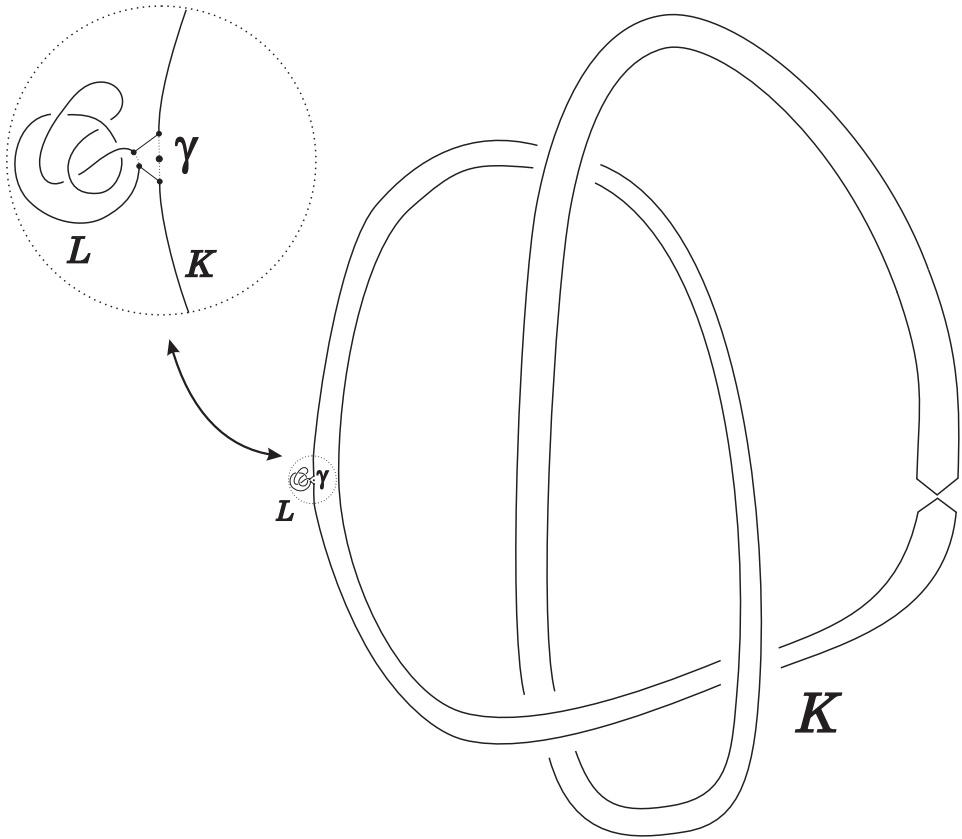}
\caption {Construction of $X_{KL}$} \label{pic8}
\end{figure}

Let us check that the surface germ $X_{KL}$ satisfies conditions of Theorem \ref{twoknots}.

1. Since the link of $X_K$ is unknotted, the connected sum is isotopic to $L$.

 The proof of the fact that, for a fixed knot $L$, all surface germs $X_{KL}$ are outer bi-Lipschitz equivalent
is the same as the proof that all surface germs $X_K$ are outer bi-Lipschitz equivalent in the proof of Theorem \ref{universality}.

2. Since $X_{KL}$ is a one-bridge surface germ, its tangent link is the union of two knots with a single common point.
One of these two knots is isotopic to $K$, and  the other one is isotopic to the connected sum of $K$ and $L$.
Since the first knot is isotopic to $K$, condition 2 is satisfied.
\end{proof}

The next result is another modification of the Universality Theorem. In contrast to the previous results, we consider surface germs with the metric structure more complicated than one-bridge.

\begin{theorem}\label{twoknots-wedge}
For any two knots $K$ and $L$, and for any two rational numbers $\alpha$ and $\beta$ such that $1\le\alpha\le\beta$,
there exists a semialgebraic surface germ $X^{\alpha\beta}_{KL}$ such that:

1. For any knots $K$ and $L$, the link at the origin of $X^{\alpha\beta}_{KL}$ is isotopic to $L$.

2. For any knots $K$ and $L$, the tangent link of $X^{\alpha\beta}_{KL}$ is isotopic to $K$.

3. For fixed $\alpha$ and $\beta$, all surface germs $X^{\alpha\beta}_{KL}$ are outer bi-Lipschitz equivalent.
\end{theorem}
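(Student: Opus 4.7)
The plan is to adapt the universality construction from Theorem \ref{twoknots} by replacing the $\beta$-bridge with an $(\alpha,\beta)$-wedge (Definition \ref{wedge}). The advantage is that the $(\alpha,\beta)$-wedge has a $2$-dimensional flat tangent cone; by embedding it so that this tangent cone lies inside the tangent cone $C(K)$ of the straight cone over $K$, the tangent link of the resulting surface germ will be a single knot isotopic to $K$, rather than the two-component tangent link produced by the bridge construction in Theorem \ref{twoknots}.

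Concretely, I would proceed as follows. Start with the straight cone $\widetilde X_K=C(K)$ over $K$ in $\R^4$, which has both link at origin and tangent link isotopic to $K$. Choose an arc $\gamma\subset\widetilde X_K$ and take a $\beta$-horn-like neighbourhood $V_\beta(\gamma)\subset\R^4$ of $\gamma$ (Definition \ref{horn}). Inside $V_\beta(\gamma)$, embed a shrunken copy $\widetilde Z_L$ of the straight cone over $L$, obtained by $\beta$-hornification along $\gamma$ of a straight cone over $L$ in a conical neighbourhood $V_1(\gamma)$; a direct computation with the formula $\Xi_\beta(x,y,z,t)=(xt^\beta,yt^\beta,zt^\beta,t)$ from Definition \ref{horn} shows that the tangent cone of $\widetilde Z_L$ is the ray $\gamma$. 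Now remove a pair of matched thin H\"older triangles from $\widetilde X_K$ and $\widetilde Z_L$ and fill them in using an $(\alpha,\beta)$-wedge embedded inside $V_\beta(\gamma)$, in analogy with the band-bridge construction of Definition \ref{band-bridge} and the connecting step in the proof of Theorem \ref{twoknots}. Call the resulting surface germ $X^{\alpha\beta}_{KL}$.

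I now verify the three properties. For (1), the same saddle-type argument as in the proof of Theorem \ref{twoknots} shows that the wedge surgery replaces a small arc of $K_\varepsilon$ inside a topological ball on $S_\varepsilon$ by an arc that detours through the link of $\widetilde Z_L$ on $S_\varepsilon$, yielding a link at the origin isotopic to $L$. For (2), the tangent cone of $\widetilde Z_L$ is the arc $\gamma\subset C(K)$, and the $(\alpha,\beta)$-wedge is embedded so that its tangent cone is a $2$-dimensional sub-wedge of $C(K)$; therefore the tangent cone of $X^{\alpha\beta}_{KL}$ equals $C(K)$, and its tangent link is $K$. For (3), for fixed $\alpha,\beta$ the abstract outer metric structure of $X^{\alpha\beta}_{KL}$ is a gluing of a cone over a knot minus a H\"older triangle, a hornified cone over a knot minus a triangle, and an $(\alpha,\beta)$-wedge; each of these three pieces is outer bi-Lipschitz equivalent, as an abstract metric space, to its counterpart for any other pair of knots, since cones over any two knots are pairwise outer bi-Lipschitz equivalent and the wedge piece depends only on $\alpha,\beta$. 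Gluing the component bi-Lipschitz maps along matched arcs, as in the proof of Theorem \ref{twoknots}, yields the required outer bi-Lipschitz equivalence.

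The main obstacle is ensuring that the wedge surgery produces $L$ at the link at the origin, rather than $K\#L$ or a more complicated knot: the starting germ $\widetilde X_K$ already has link $K$, so a naive saddle move would produce the connected sum $K\#L$. To avoid this, the $(\alpha,\beta)$-wedge must be engineered, exploiting its specific $4$-sheeted structure, so that its surgery inside the horn effectively cuts out the arc of $K$ contained in the horn and closes the remainder of $K$ through the copy of $L$ carried by $\widetilde Z_L$. Making this precise requires careful tracking of the orders of contact of Definition \ref{contactorder} between the arcs involved, in the spirit of the proofs of Theorems \ref{universality} and \ref{twoknots}.
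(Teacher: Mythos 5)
There is a genuine gap, and it is exactly at the point you flag as ``the main obstacle'': it is not a technical difficulty to be engineered away, it is a topological obstruction. You start from the straight cone over $K$, so the link at the origin of your starting germ is already the knot $K$, and all of your surgery (the hornified copy of the cone over $L$ and the connecting triangles) is supported inside a horn-like neighbourhood $V_\beta(\gamma)$ of a single arc, i.e., on the sphere $S^3_\epsilon$ it takes place inside a small ball that meets the link in one unknotted spanning arc. Replacing that arc by any other arc inside the ball --- however it is routed through the copy of $L$ --- produces a knot of the form $K\# K''$ for some knot $K''$. By additivity of genus (equivalently, uniqueness of prime decomposition), $K\# K''$ is isotopic to $L$ only if $K$ is a connected summand of $L$; for example, for $K$ a trefoil and $L$ the unknot no such local modification exists. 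So property 1 cannot be achieved along the route you propose, no matter how the ``$4$-sheeted structure'' of the wedge is exploited; the best you can get this way is $K\# L$, which is the content of Theorem \ref{twoknots}, not of Theorem \ref{twoknots-wedge}.

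The paper avoids this by never taking $C(K)$ as the starting surface. It takes the cone over the characteristic band $F_K$ (two parallel copies of $K$ with linking number zero), applies the $\alpha$-contraction $\{|l|\le t^\alpha\}$ so that the tangent link collapses to a single copy of $K$, and replaces the contracted slice cone by the $(\alpha,\beta)$-wedge of Definition \ref{wedge}, exactly as in Theorem \ref{universality}. The resulting germ $X^{\alpha\beta}_K$ has \emph{unknotted} link at the origin (it bounds the band minus the slice, a disk) while its tangent link is already $K$. Only then is the $\beta$-hornified cone over $L$ attached by the connected-sum surgery along two arcs of contact order $\beta$ inside $V_\beta(\gamma)$: since the link of $X^{\alpha\beta}_K$ is trivial, the connected sum is isotopic to $L$, and since the hornified $Z_L$ has tangent cone equal to the ray $\gamma$, the tangent link stays $K$. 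Note also that in the paper the wedge and the horn surgery are two distinct devices used at different places: the wedge sits where the contracted slice cone was (its flat tangent cone fits inside the contracted band's tangent cone), while the attachment of $L$ uses matched H\"older triangles inside the horn; your plan conflates the two, and in particular an $(\alpha,\beta)$-wedge, whose tangent cone is two-dimensional, cannot be embedded inside $V_\beta(\gamma)$, whose tangent cone is just the ray $\gamma$. Your verification of property 3 is in the right spirit and survives in the paper's construction, but properties 1 and 2 need the contracted-band starting point described above.
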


\begin{proof}
Let $F_K \subset S^3$ be the characteristic band of a knot $K$ (see Definition \ref{characteristic-band}).
It is diffeomorphic to $S^1 \times [-1,1]$, and its boundary has two components $\widetilde{K}$ and $\widetilde{K}'$ isotopic to $K$.
Let $(\rho,l)$, where $\rho\in S^1$ and $l\in [-1,1]$, be coordinates in $F_K$.
Let $\widetilde{Y}_K$ and  $\widetilde{X}_K$ be the corresponding characteristic cones (see Definition \ref{characteristic-band}).
Then $(\rho,l,t)$ are coordinates in $\widetilde{Y}_K$, where $t$ is the distance to the origin.
Let $\widetilde{Y}^\alpha_K$ be a subset of $\widetilde{Y}_K$ defined as follows: \newline $\widetilde{Y}^{\alpha}_K=\{(\rho,l,t):|l|\le t^\alpha\}$.
The set $\widetilde{Y}^\alpha_K$ is called \emph{$\alpha$-contraction} of $\widetilde{Y}_K$.
Notice that the tangent link of $\widetilde{Y}^\alpha_K$ is a knot isotopic to $K$.

Let $S_K=\{(\rho,l):|\rho-\rho_0|\le\epsilon\}$ be a slice of $F_K$ (see Definition \ref{slice}) for a small $\epsilon>0$, and
let $M_K$ be the cone over $S_K$.
Let $M^\alpha_K=\{(\rho,l,t):\rho_0-\epsilon\le\rho\le\rho_0+\epsilon,\;|l|\le t^\alpha\}$ be $\alpha$-contraction of $M_K$.
Replacing $M^\alpha_K$ by the $(\alpha,\beta)$-wedge $W^{\alpha\beta}$ (see Definition \ref{wedge} and Figure \ref{fig:wt}b) as in the proof of Theorem \ref{universality}, we get the set $Y^{\alpha\beta}_K$. Let $X^{\alpha\beta}_K$ be the boundary of $Y^{\alpha\beta}_K$.

Let $\gamma\subset X ^{\alpha\beta}_K$ be an arc far from the set $W^{\alpha\beta}$,
i.e., $tord(\gamma,\gamma')=1$ for any arc $\gamma'\subset W^{\alpha\beta}$. Let $Z_L$ be the straight cone over $L$.
Let $V_\beta(\gamma)\subset\R^4$ be a $\beta$-horn like neighbourhood of $\gamma$.
Let $Z^\beta_{L,\gamma}=\Psi_\beta(Z_L)\subset V^\beta(\gamma)$ be a $\beta$-hornification of $Z_L$ to $\gamma$
(see Definition \ref{horn} and Figure \ref{horn-curve}).
Let us choose two arcs $\gamma_1$ and $\gamma_2$ in $X_K\cap V_\beta (\gamma)$, and two arcs $\gamma'_1$ and $\gamma'_2$ in
$Z^\beta_{L,\gamma} \cap V_\beta (\gamma)$ satisfying the following conditions:

a. $tord(\gamma_1,\gamma_2)=\beta,\; tord(\gamma'_1,\gamma'_2)=\beta$.

b. If we remove from $X_K$ the H\"older triangle bounded by the arcs $\gamma_1$ and $\gamma_2$, remove from $Z^\beta_{L,\gamma}$ the H\"older triangle bounded by the arcs $\gamma'_1$ and $\gamma'_2$, and add to the set $X_K \cup \tilde Z$ the H\"older triangle obtained as the union of line segments connecting $\gamma_1(t)$ and $\gamma'_1(t)$, and the H\"older triangle obtained as the union of line segments connecting $\gamma_1(t)$ and $\gamma'_2(t)$, we obtain a semialgebraic set $X^{\alpha\beta}_{KL}$ with the link isotopic to the connected sum of the links of $X_K$ and $Z^\beta_{L,\gamma}$ (see Figure \ref{pic8}).
Note that construction of $X^{\alpha\beta}_{KL}$ is similar to construction of $X_{KL}$ in the proof of Theorem \ref{twoknots} and to the saddle move construction in Definition \ref{saddle}.

Let us check that the surface germ $X^{\alpha\beta}_{KL}$ satisfies conditions of Theorem \ref{twoknots-wedge}.

1. Since $X^{\alpha\beta}_K$ has a trivial link, the connected sum is isotopic to $L$.

2. Since $Z_L$ is a subset of a $\beta$-horn neighbourhood of $\gamma$, it corresponds to a single point in the tangent link.
Thus the tangent link of $X^{\alpha\beta}_{KL}$ is the same as the tangent link of $X^{\alpha\beta}_K$, which is isotopic to $K$.

3. The proof of the fact that the surface germs $X^{\alpha\beta}_{KL}$ are outer bi-Lipschitz equivalent for a fixed $L$
is the same as the proof that all surfaces $X_{KL}$ are outer bi-Lipschitz equivalent in the proof of Theorem \ref{twoknots}.
\end{proof}

\section{Knot Invariants}\label{invariants}
\vspace{0,5cm}

In this section we make slight changes of notations. In the previous sections we used the notation $L_X$ for the link at the origin of a surface germ $X$.
Here we are going to use the notation $K_X$ if the link at the origin of $X$ is a knot, and $L_X$ if it is a topological link with more than one component.

Let us first recall the definition of the Jones polynomial $J(L)$ of a link $L$
via Kauffman bracket polynomial $\left\langle D_L\right\rangle$, where $D_L$ is a link diagram of $L$.
Kauffman bracket polynomial \cite{Ka} is a polynomial in a variable $A$
which is uniquely determined by the following properties:

\begin{enumerate}
\item Kauffman bracket on the trivial diagram equals one, i.e., $\left\langle O\right\rangle=1$
\item Skein relation
$\left\langle\ris{-4}{-1.1}{10}{L-bracket}{-1.1}\right\rangle=A\left\langle\ris{-4}{-1.1}{10}{smoothing}{-1.1}\right\rangle
+A^{-1}\left\langle\ris{-4}{-1.1}{10}{smoothing-rev}{-1.1}\right\rangle$
\item For any link diagram $D_{L'}$ we have $\left\langle O\cup D_{L'}\right\rangle=(-A^2-A^{-2})\left\langle D_{L'}\right\rangle$
\end{enumerate}
The Jones polynomial of an oriented link $L$ can be defined as
$$J(L)=(-A^3)^{-\omega(D_L)}\left\langle D_L\right\rangle,$$
after the substitution $A=t^{-\frac{1}{4}}$.
Here $\omega(D_L)$ is the writhe number of the diagram $D_L$, i.e., the number of positive
crossings minus the number of negative crossings in $D_L$.

\begin{proposition}\label{T3}
Let $X$ be a one bridge surface such that the link of $X$ at the origin is a knot $K_X$.
Let $K_{C(X)}$ be the knot, obtained from $K(X)$ by the crossing move.
Let $Y$ be a one-bridge germ such that the link at the origin of $Y$ is the same knot $K_Y=K_X$ as the link at the origin of $X$.
Let $S(Y)$ be a germ obtained from $Y$ by the saddle move. Suppose that $Y$ is such that the link at the origin of the surface $S(Y)$
is a $2$-component link $L_{S(Y)}$.
If the Jones polynomial $J(K_{C(X)})$ of the knot $K_{C(X)}$ satisfies
$$
J(K_{C(X)})\neq -t^{\frac{1}{2}}J(L_{S(Y)})+\left(-1\right)^{\omega(D_{K'})-\omega(D_K)}t^{\frac{3(\omega(D_{K'})-\omega(D_K))+1}{4}}J(K),
$$
where $D_K$ is a diagram of a knot $K$ determined by $X$, and $D_{K'}$ is a diagram (determined by the crossing move) of a knot $K_{C(X)}$
then $X$ and $Y$ are not semialgebraic ambient bi-Lipschitz equivalent.

\end{proposition}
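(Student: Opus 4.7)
The plan is to argue by contrapositive. Suppose, toward a contradiction, that $X$ and $Y$ are semialgebraic ambient bi-Lipschitz equivalent; I will derive a Jones polynomial identity that is exactly the negation of the displayed inequality. The entire Lipschitz content enters in a single step: applying Lemma \ref{bridgelemma} (equivalently Proposition \ref{saddle-saddle}) to the hypothesized equivalence yields that the links $L_{S(X)}$ and $L_{S(Y)}$ obtained from $X$ and $Y$ by the saddle move are isotopic in $S^3$, and therefore
\[
J(L_{S(X)}) \;=\; J(L_{S(Y)}).
\]

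The rest of the argument is purely diagrammatic. Using Theorem \ref{finiteness} I would fix, for small enough $t>0$, a generic projection of $X\cap S_t$ producing a diagram $D_K$ of $K = K_X$ in which the bridge region of $X$ appears as the local ``smoothing-rev'' pattern of Remarks \ref{saddle-diagrams} and \ref{crossing}. By Definitions \ref{saddle} and \ref{crossin}, the diagrams $D_L$ of $L_{S(X)}$ and $D_{K'}$ of $K_{C(X)}$ are then obtained from $D_K$ by purely local substitutions inside a small disk around the bridge, replacing the ``smoothing-rev'' pattern with ``smoothing'' and ``L-bracket'' respectively. Since these substitutions only modify the disk, the saddle move changes no crossings (so $\omega(D_L) = \omega(D_K)$) whereas the crossing move introduces exactly one new crossing, giving $d := \omega(D_{K'}) - \omega(D_K) \in \{\pm 1\}$ with sign fixed by the orientation convention of Definition \ref{crossin}. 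Applying the Kauffman bracket skein relation at the new crossing yields
\[
\langle D_{K'}\rangle \;=\; A\,\langle D_L\rangle + A^{-1}\,\langle D_K\rangle.
\]
Multiplying both sides by $(-A^3)^{-\omega(D_{K'})}$, substituting $A = t^{-1/4}$, and using $J(\cdot) = (-A^3)^{-\omega(\cdot)}\langle\cdot\rangle$ converts this Kauffman identity into an explicit expression for $J(K_{C(X)})$ as a combination of $J(L_{S(X)})$ and $J(K)$ with coefficients depending only on $d$.

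To conclude, I would substitute $J(L_{S(X)}) = J(L_{S(Y)})$ from the first step into the formula obtained in the second; the result coincides with the right-hand side displayed in the statement, contradicting the strict inequality assumed in the hypothesis. Hence $X$ and $Y$ cannot be semialgebraic ambient bi-Lipschitz equivalent. The only conceptually substantive step is the first one, where the ambient Lipschitz equivalence is converted to an isotopy of the saddle-move links via Lemma \ref{bridgelemma}; the remaining difficulty is careful bookkeeping of writhes to verify that the coefficients of $J(L_{S(Y)})$ and $J(K)$ produced by the skein-to-Jones normalization match the stated $-t^{1/2}$ and $(-1)^d t^{(3d+1)/4}$, respectively, which is exactly where the orientation convention for the crossing introduced by the crossing move enters.
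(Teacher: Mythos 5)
Your overall route is the same as the paper's: convert the hypothesized ambient equivalence into an isotopy $L_{S(X)}\simeq L_{S(Y)}$ via Proposition \ref{saddle-saddle}, apply the Kauffman skein relation at the crossing created by the crossing move, and translate to Jones polynomials through the writhe normalization; the paper just phrases it as the identity \eqref{eq:Jones1} plus the contrapositive of Proposition \ref{saddle-saddle}, while you argue by contradiction. So the structure and the one substantive step are correct.

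However, your writhe bookkeeping contains a genuine error. You claim that the saddle move ``changes no crossings, so $\omega(D_L)=\omega(D_K)$'' and that consequently $\omega(D_{K'})-\omega(D_K)\in\{\pm1\}$. Neither holds in general: the writhe is an invariant of an \emph{oriented} diagram, and both moves change the strand connectivity inside the disk, so the orientations of $D_L$ and $D_{K'}$ (which are fixed by the conventions that the new crossing is positive and the saddle site looks like the oriented smoothing) need not agree with the orientation of $D_K$ outside the disk; when part of a strand, or a whole component of $D_L$, gets the reversed orientation, the signs of the unchanged crossings flip and $\omega(D_L)-\omega(D_K)$ and $\omega(D_{K'})-\omega(D_K)$ can be arbitrary integers. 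This is precisely why the proposition's formula carries the general exponent $\omega(D_{K'})-\omega(D_K)$ rather than $\pm1$. The relation you actually need is only $\omega(D_{K'})=\omega(D_L)+1$, and it does hold, not for your reason but because $D_L$ is the \emph{oriented} smoothing of the distinguished positive crossing of $D_{K'}$: the orientations then agree at every other crossing, so all other signs coincide and the writhes differ by exactly the sign of the smoothed crossing. With that correction (which is exactly the identity the paper invokes), your computation of the coefficients $-t^{1/2}$ and $(-1)^{\omega(D_{K'})-\omega(D_K)}t^{\frac{3(\omega(D_{K'})-\omega(D_K))+1}{4}}$ goes through and the rest of your argument is fine; note also that the coefficient of $J(K)$ requires no writhe relation at all, only the normalization $J=(-A^3)^{-\omega}\langle\,\cdot\,\rangle$ applied to $D_K$ and $D_{K'}$.
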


\begin{proof}
Let $D_K$ be a diagram of a knot $K(X)$ determined by $X$ and let $D_{K'}$ be a diagram of a knot $K_{C(X)}$. Let us orient $D_K$ in an arbitrary way. We orient $D_{K'}$ so that
the intersection, corresponding to the crossing move (see Figure \ref{s-c}) on the  diagram is positive, i.e., it looks like
$\left(\ris{-4}{-1}{10}{L+}{-1.1}\right)$.
Let $S(X)$ be a germ of a surface obtained from $X$ by a saddle move. Let $D_L$ be the
corresponding diagram of the characteristic link $L_{S(X)}$. We orient $D_L$ such that
the part, corresponding to the saddle move (see Figure \ref{s-c}) looks like $\left(\ris{-4}{-1.1}{10}{L0}{-1.1}\right)$.
Before the substitution $A=t^{-\frac{1}{4}}$ we have
$$
\left\langle D_{K'}\right\rangle=(-A^3)^{\omega(D_{K'})}J(K_{C(X)})\qquad
\left\langle D_L\right\rangle=(-A^3)^{\omega(D_L)}J(L_{S(X)}).
$$
Now it follows from the condition (2) of the Kauffman bracket that
$$
(-A^3)^{\omega(D_{K'})}J(K_{C(X)})=A(-A^3)^{\omega(D_L)}J(L_{S(X)})+A^{-1}(-A^3)^{\omega(D_K)}J(K).
$$
Using the fact that $\omega(D_{K'})=\omega(D_L)+1$ and after the substitution $A=t^{-\frac{1}{4}}$ we get
\begin{equation}\label{eq:Jones1}
J(K_{C(X)})=-t^{\frac{1}{2}}J(L_{S(X)})+\left(-1\right)^{\omega(D_{K'})-\omega(D_K)}t^{\frac{3(\omega(D_{K'})-\omega(D_K))+1}{4}}J(K).
\end{equation}

Recall that Proposition \ref{saddle-saddle} implies that if the link
$L_{S(X)}$ is not isotopic to the link $L_{S(Y)}$,
then $X$ and $Y$ are not semi-algebraic ambient bi-Lipschitz equivalent. Hence if
$J(L_{S(X)})\neq J(L_{S(Y)})$, then $X$ and $Y$ are not semialgebraic ambient bi-Lipschitz equivalent.
Now equality \eqref{eq:Jones1} yields the proof of the proposition.
\end{proof}

\begin{figure}
\centering
\includegraphics[width=5in]{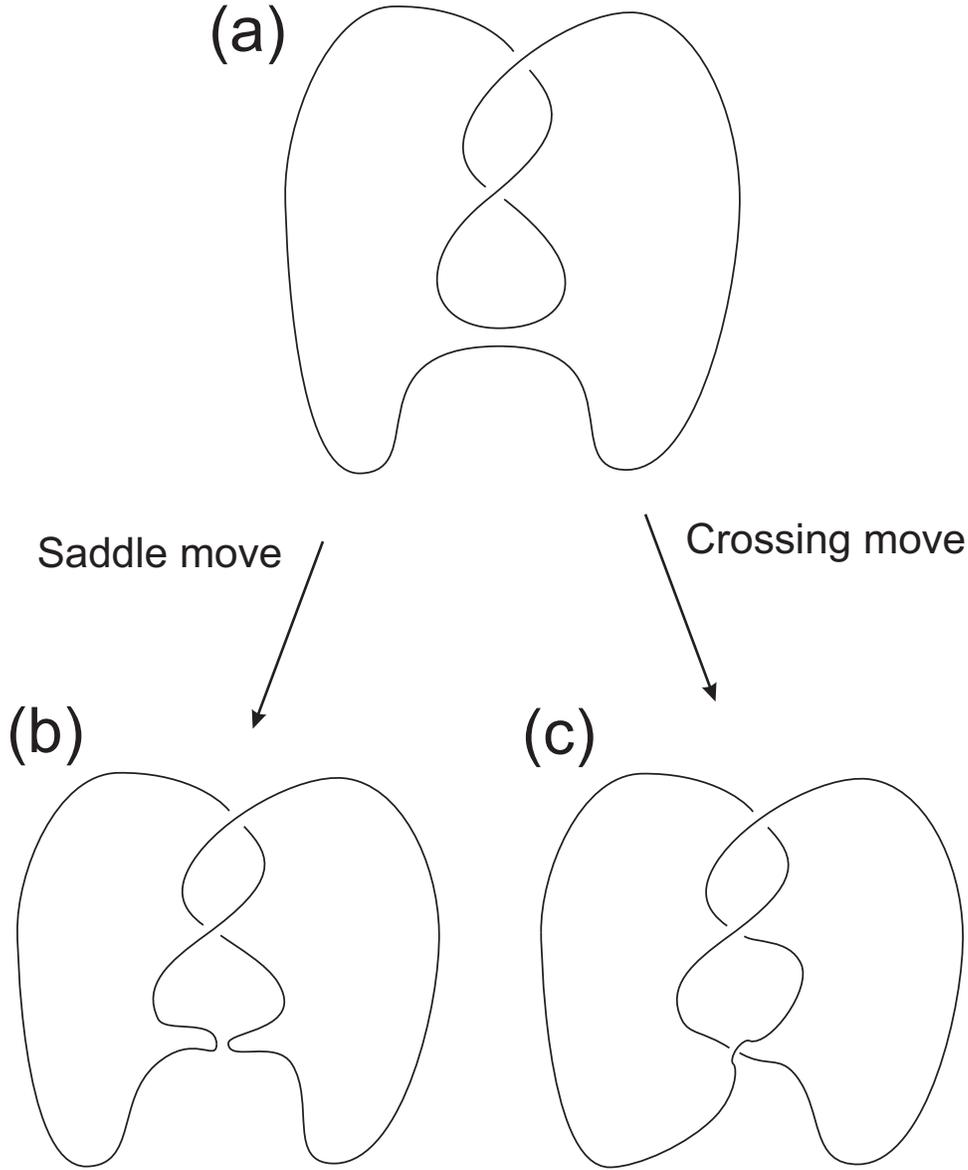}
\caption {The Saddle move and the Crossing move} \label{s-c}
\end{figure}

\begin{corollary}\label{specialcase}
 If $K$ is a trivial knot and $L_{S(Y)}$ is $(2,2m)$-torus link $L(2,2m)$, where $m$ is a non-negative integer,
then we get the following closed formula: If the Jones polynomial $J(K_{C(X)})$ of the knot $K_{C(X)}$ satisfies
\begin{equation}\label{eq:torus-links}
J(K_{C(X)})\neq t^m+t^{m+2}\left(\frac{1+t^{2m-1}}{1+t}\right)+\left(-1\right)^{\omega(D_{K'})-\omega(D_K)}t^{\frac{3(\omega(D_{K'})-\omega(D_K))+1}{4}},
\end{equation}
then $X$ and $Y$ are not semialgebraic ambient bi-Lipschitz equivalent.
\end{corollary}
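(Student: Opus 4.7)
The plan is to specialize the conclusion of Proposition \ref{T3} to the case in which $K$ is the unknot (so $J(K) = 1$) and $L_{S(Y)}$ is the $(2,2m)$-torus link $L(2, 2m)$, and then carry out an explicit computation of $J(L(2, 2m))$.

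First, substituting $J(K) = 1$ into Proposition \ref{T3}, the contrapositive becomes: if
$$J(K_{C(X)}) \neq -t^{1/2} J(L(2, 2m)) + (-1)^{\omega(D_{K'})-\omega(D_K)} t^{\frac{3(\omega(D_{K'})-\omega(D_K))+1}{4}},$$
then $X$ and $Y$ are not semialgebraic ambient bi-Lipschitz equivalent. Comparing this with (\ref{eq:torus-links}), it suffices to establish the identity
$$-t^{1/2} J(L(2, 2m)) = t^m + t^{m+2}\frac{1+t^{2m-1}}{1+t},$$
or equivalently $J(L(2, 2m)) = -t^{m-1/2} - t^{m+3/2}(1+t^{2m-1})/(1+t)$.

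The remaining task is to compute this Jones polynomial. I would realize $L(2, 2m)$ as the closure $\widehat{\sigma_1^{2m}}$ of the 2-braid $\sigma_1^{2m}$ (whose writhe equals $2m$) and apply the Kauffman bracket skein relation to a single crossing. Using the standard identity $\sigma_1 = A \cdot \mathbf{1} + A^{-1} e_1$ in the Temperley--Lieb algebra $TL_2$, together with $e_1 \sigma_1 = -A^{-3} e_1$ (an easy consequence of $e_1^2 = \delta e_1$ with $\delta = -A^2 - A^{-2}$), one obtains the recurrence
$$\langle \widehat{\sigma_1^n} \rangle = A \langle \widehat{\sigma_1^{n-1}}\rangle + (-1)^{n-1} A^{-3n+2}$$
with base value $\langle \widehat{\sigma_1^0}\rangle = -A^2 - A^{-2}$. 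Solving this recurrence, normalizing by the writhe factor $(-A^3)^{-2m}$, and substituting $A = t^{-1/4}$ reduces everything to the alternating geometric sum $\sum_{k=1}^{2m}(-1)^{k-1} t^k = t(1-t^{2m})/(1+t)$, whose evaluation and algebraic simplification yield precisely the desired closed form for $J(L(2, 2m))$.

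The main obstacle is bookkeeping: tracking the writhe correction, the loop factor $\delta$, and the variable substitution without sign errors, and checking the formula against the base cases $m = 0$ (two-component unlink, $J = -t^{1/2}-t^{-1/2}$) and $m = 1$ (positive Hopf link, $J = -t^{1/2}-t^{5/2}$). Once $J(L(2, 2m))$ is in hand, the corollary follows by direct substitution into the displayed identity.
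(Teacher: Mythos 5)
Your proposal is correct, but it reaches the key identity by a genuinely different route than the paper. Both arguments end the same way: set $J(K)=1$ in \eqref{eq:Jones1} (Proposition \ref{T3}) and substitute the closed form $J(L(2,2m))=-t^{\frac{2m-1}{2}}-t^{\frac{2m+3}{2}}\frac{1+t^{2m-1}}{1+t}$, which is exactly your target identity $-t^{1/2}J(L(2,2m))=t^m+t^{m+2}\frac{1+t^{2m-1}}{1+t}$. The difference is how that closed form is obtained. The paper quotes the known Jones polynomial of the torus knots $K(2,2n+1)$ and applies the Jones skein relation once more to deduce \eqref{eq:Jones2}; this is short but leans on an external formula. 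You instead compute $\left\langle\widehat{\sigma_1^{2m}}\right\rangle$ from scratch in $TL_2$: the recurrence $\left\langle\widehat{\sigma_1^{n}}\right\rangle=A\left\langle\widehat{\sigma_1^{n-1}}\right\rangle+(-1)^{n-1}A^{-3n+2}$ with base value $-A^2-A^{-2}$ is correct (since $e_1\sigma_1=-A^{-3}e_1$ and the closure of $e_1$ is a single circle), the writhe of the standard diagram is indeed $2m$, and after normalizing by $(-A^3)^{-2m}$ and setting $A=t^{-1/4}$ one gets $J=-t^{m-1/2}-t^{m+1/2}+t^{m-1/2}\cdot\frac{t(1-t^{2m})}{1+t}$, which simplifies to $-t^{m-1/2}-t^{m+3/2}\frac{1+t^{2m-1}}{1+t}$, i.e.\ \eqref{eq:Jones2}; your base checks $m=0$ (unlink) and $m=1$ (positive Hopf link) are consistent. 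What your route buys is self-containedness within the Kauffman-bracket framework the paper already set up, at the cost of the bookkeeping you acknowledge; what the paper's route buys is brevity by citing the torus-knot formula. One small caveat worth stating explicitly if you write this up: the closed form depends on the orientation convention for the two-component link (parallel orientation of the closed positive $2$-braid, linking number $m$), which is the convention implicit in \eqref{eq:Jones2}.
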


\begin{proof}
Recall that for each $n$ the Jones polynomial of the torus
knot $K(2,2n+1)$ equals
$$
J(K(2,2n+1))=t^n\frac{1-t^3-t^{2n+2}+t^{2n+3}}{1-t^2},
$$
see e.g. \cite{Jones}. The skein relation for the Jones polynomial together with the above equality yield
\begin{equation}\label{eq:Jones2}
J(L(2,2m))=-t^{\frac{2m-1}{2}}-t^{\frac{2m+3}{2}}\left(\frac{1+t^{2m-1}}{1+t}\right).
\end{equation}

Noting that if $K$ is a trivial knot, then its Jones polynomial $J(K)=1$, and applying
equalities \eqref{eq:Jones1} and \eqref{eq:Jones2} we obtain the proof of the corollary.
\end{proof}

%\begin{figure}
%\centering
%\includegraphics[width=5in]{saddle-crossing.eps}
%\caption {The Saddle move and the Crossing move} \label{s-c}
%\end{figure}

\begin{remark}\label{last-remark} \emph{
The above theorem has two advantages: it has a computational value, and as
its immediate corollary we obtain the main result of Birbrair-Gabrielov \cite[Theorem 4.1]{BG}.
Let us illustrate this on the following example. Let $X$ be such that it determines a knot diagram $D_K$ which has no intersections,
and after the crossing move the diagram $D_{K'}$ has exactly one positive intersection. It follows that $\omega (D_{K'})-\omega (D_K)=1$,
and $J(K_{C(X)})=1$ since $K_{C(X)}$ is a trivial knot.
Let $Y$ be such that it determines a trivial knot diagram presented in Figure \ref{s-c}a. The diagram of the link $L_{S(Y)}$ is
presented in Figure \ref{s-c}b. Note that it is a $(2,2)$-torus link (Hopf link). The diagram of the knot $K_{C(Y)}$
is presented in Figure \ref{s-c}c. Note that it is a trefoil knot. Noting that $m=1$ the right hand side
of equation \eqref{eq:torus-links} equals $t^3$. Hence $J(K_{C(X)})\neq t^3$ and thus $X$ and $Y$ are not semialgebraic ambient bi-Lipschitz equivalent.}
\end{remark}

\end{document}